\theoremstyle{plain}
\newtheorem{teo}{Theorem}[section]
\newtheorem{lemma}[teo]{Lemma}
\newtheorem{prop}[teo]{Proposition}
\newtheorem{cor}[teo]{Corollary}
\newtheorem{ackn}{Acknowledgments\!}
\theoremstyle{definition}
\newtheorem{dfnz}[teo]{Definition}
\theoremstyle{remark}
\newtheorem{rem}[teo]{Remark}
\DeclareMathOperator{\Hess}{Hess}
\theoremstyle{definition}
\theoremstyle{remark}
\numberwithin{equation}{section}
\def\RR{{\mathbb R}}
\def\H{{{\mathcal H}}}
\def\R{{\mathbb R}}
\def\Ric{{\mathrm {Ric}}}
\def\trace{\operatornamewithlimits{trace}\nolimits}
\def\cutp{{\mathrm{Cut}}_p}
\def\ress  {\begin{picture}(9,8)
\put (1,0){\line(0,1){6}}
\put (1,0){\line(1,0){5}}
\end{picture}}
\def\res{\,\ress}
\def\Derpar#1 { \frac{\partial~ } {\partial {#1} }} 
\def\derpar#1 #2 { \frac{\partial{#2}} {\partial {#1} }} 
\def\lra{\to}
\def\rl{\R}
\def\real{\R}
\def\chart{\psi}
\def\composed{\circ}
\def\supdiff{{\partial^+}}
\DeclareMathOperator{\Sing}{Sing}
\DeclareMathOperator{\Conj}{Conj}
\def\testfun{\varphi}
\def\apT{{\mathrm{ap}}T}
\title[On the Distributional Hessian of the Distance Function]{On the
  Distributional Hessian of the Distance Function}
\author[Carlo Mantegazza]{Carlo Mantegazza}
\address[Carlo Mantegazza]{Scuola Normale Superiore di Pisa, Piazza dei Cavalieri 7, Pisa, Italy, 56126}
\email[C. Mantegazza]{c.mantegazza@sns.it}
\author[Giovanni Mascellani]{Giovanni Mascellani}
\address[Giovanni Mascellani]{Scuola Normale Superiore di Pisa, Piazza dei Cavalieri 7, Pisa, Italy, 56126}
\email[G. Mascellani]{giovanni.mascellani@sns.it}
\author[Gennady Uraltsev]{Gennady Uraltsev}
\address[Gennady Uraltsev]{Scuola Normale Superiore di Pisa, Piazza dei Cavalieri 7, Pisa, Italy, 56126}
\email[G. Uraltsev]{gennady.uraltsev@sns.it}
\date{\today}
\begin{document}

\begin{abstract} We describe the precise structure of the
  distributional Hessian of the distance function from a point of a
  Riemannian manifold. In doing this we also discuss some geometrical
  properties of the cutlocus of a point and we compare some different
  weak notions of Hessian and Laplacian.
\end{abstract}

\maketitle

\section{Introduction}

Let $(M,g)$ be an $n$--dimensional, smooth, complete Riemannian
manifold, for any point $p\in M$ we define $d_p:M\to\RR$ to be the
distance function from $p$. 

Such distance functions and their relatives, the Busemann functions,
enters in several arguments of differential geometry. It is easy to
see that, apart from the obvious singularity at the point $p$, with
some few exceptions such distance function is not smooth in
$M\setminus\{p\}$ (for instance, when the manifold $M$ is compact), it
is anyway 1--Lipschitz and differentiable with a unit gradient almost
everywhere (by Rademacher's theorem).

In this note we are concerned with the precise description of the
distributional Hessian of $d_p$, having in mind the following {\em
  Laplacian and Hessian comparison theorems} (see~\cite{petersen1},
for instance).

\begin{teo}\label{ggg} If $(M,g)$ satisfies $\Ric\geq (n-1)K$ then, 
considering polar coordinates around the points $p\in M$ and $P$ in 
the simply connected, $n$--dimensional
space $S^K$ of constant curvature $K\in\R$, we have
$$
\Delta d_p(r)\leq \Delta^K d^K_{P}(r)\,.
$$
If the sectional curvature of $(M,g)$ is greater or equal to $K$, then
$$
{\mathrm{Hess}} \,d_p(r)\leq {\mathrm{Hess}}^K d^K_{P}(r)\,.
$$
Here $\Delta^K d^K_{P}(r)$ and ${\mathrm{Hess}}^K d^K_{P}(r)$
denote respectively the Laplacian and the Hessian 
of the distance function $d^K_P(\,\cdot\,)=d^K(P,\,\cdot\,)$ in $S^K$,
at distance $r$ from $P$.
\end{teo}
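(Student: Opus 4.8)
The plan is to reduce the two inequalities to a one–dimensional comparison along a fixed minimizing geodesic, via the second variation of arc length. Fix $q\in M$, $q\neq p$, in the region where the polar coordinates centred at $p$ make sense, i.e.\ $q\notin\cutp$; there is then a unique unit–speed minimizing geodesic $\gamma\colon[0,r]\to M$, $r=d_p(q)$, with $\gamma(0)=p$, $\gamma(r)=q$, the function $d_p$ is smooth near $q$ with $\nabla d_p(\gamma(t))=\gamma'(t)$, and $q$ is not conjugate to $p$ along $\gamma$. Since $|\nabla d_p|\equiv1$ near $q$ we have $\Hess d_p(\nabla d_p,\,\cdot\,)=\tfrac12\,d(|\nabla d_p|^2)=0$, so it is enough to estimate $\Hess d_p$ on the hyperplane $(\gamma'(r))^{\perp}\subset T_qM$. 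On the model side I would fix a unit–speed geodesic $\gamma^K\colon[0,r]\to S^K$ from $P$ to $Q=\gamma^K(r)$, with $r$ in the domain of the polar coordinates centred at $P$, and a linear isometry $\iota\colon(\gamma'(r))^{\perp}\to((\gamma^K)'(r))^{\perp}$; all comparisons below are made through $\iota$.

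The first ingredient will be the Jacobi–field expression for the Hessian of a distance function. Given $X\in(\gamma'(r))^{\perp}$, let $J$ be the unique Jacobi field along $\gamma$ with $J(0)=0$, $J(r)=X$ (it exists because $q$ is not conjugate to $p$, and it is automatically orthogonal to $\gamma'$). Realising $J$ as the variational field of a variation of $\gamma$ through unit–speed geodesics issuing from $p$, one gets $\nabla_{\gamma'}J=\nabla_J\nabla d_p$ along $\gamma$, whence
\[
\Hess d_p(X,X)=\langle\nabla_J\nabla d_p,J\rangle(r)=\langle\nabla_{\gamma'}J,J\rangle(r)=I_0^r(J,J),
\]
where $I_0^r(V,V)=\int_0^r\!\big(|\nabla_{\gamma'}V|^2-\langle R(V,\gamma')\gamma',V\rangle\big)\,dt$ is the index form on $[0,r]$ and the last equality is an integration by parts using the Jacobi equation and $J(0)=0$. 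The same computation in $S^K$ is explicit: writing $\mathrm{sn}_K$ for the solution of $\mathrm{sn}_K''+K\,\mathrm{sn}_K=0$ with $\mathrm{sn}_K(0)=0$, $\mathrm{sn}_K'(0)=1$, the Jacobi field along $\gamma^K$ vanishing at $P$ and equal to $\bar X$ at $Q$ is $\bar J(t)=\tfrac{\mathrm{sn}_K(t)}{\mathrm{sn}_K(r)}\,E(t)$ with $E$ parallel and $|E|\equiv|\bar X|$, so that $\Hess^K d^K_P(\bar X,\bar X)=\tfrac{\mathrm{sn}_K'(r)}{\mathrm{sn}_K(r)}\,|\bar X|^2$ and $\Delta^K d^K_P(r)=(n-1)\tfrac{\mathrm{sn}_K'(r)}{\mathrm{sn}_K(r)}$.

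The core step will be the index–lemma comparison. Since $\gamma$ has no conjugate point of $p$ in $(0,r]$, the Jacobi field $J$ minimizes $I_0^r$ among all piecewise smooth fields $V$ along $\gamma$ with $V(0)=0$, $V(r)=X$. As a competitor I would take the field $W$ along $\gamma$ obtained by parallel transport of the model field $\bar J$, read through $\iota$ and parallel orthonormal frames on the two sides; then $W(0)=0$, $W(r)=X$, $W\perp\gamma'$, and $|\nabla_{\gamma'}W|=|\bar J'|$, $|W|=|\bar J|$ pointwise. If the sectional curvature of $(M,g)$ is $\geq K$, so that $\langle R(W,\gamma')\gamma',W\rangle\geq K|W|^2$ (as $W\perp\gamma'$), then
\begin{align*}
\Hess d_p(X,X)=I_0^r(J,J)&\leq I_0^r(W,W)=\int_0^r\!\big(|\bar J'|^2-\langle R(W,\gamma')\gamma',W\rangle\big)\,dt\\
&\leq\int_0^r\!\big(|\bar J'|^2-K|\bar J|^2\big)\,dt=\Hess^K d^K_P(\bar X,\bar X),
\end{align*}
which is the Hessian inequality. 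For the Laplacian inequality only $\Ric\geq(n-1)K$ is needed: take a parallel orthonormal frame $E_1,\dots,E_{n-1}$ of $(\gamma')^{\perp}$, set $W_i(t)=\tfrac{\mathrm{sn}_K(t)}{\mathrm{sn}_K(r)}E_i(t)$, and sum the index estimates; since $\sum_i\langle R(E_i,\gamma')\gamma',E_i\rangle=\Ric(\gamma',\gamma')\geq(n-1)K$,
\begin{align*}
\Delta d_p(q)=\sum_{i=1}^{n-1}\Hess d_p(E_i(r),E_i(r))&\leq\sum_{i=1}^{n-1}I_0^r(W_i,W_i)\\
&\leq(n-1)\int_0^r\frac{\mathrm{sn}_K'(t)^2-K\,\mathrm{sn}_K(t)^2}{\mathrm{sn}_K(r)^2}\,dt=(n-1)\frac{\mathrm{sn}_K'(r)}{\mathrm{sn}_K(r)}=\Delta^K d^K_P(r).
\end{align*}

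The one genuinely delicate point is the applicability of the index lemma, which requires the absence of conjugate points in the interior of $[0,r]$: this is precisely what the hypothesis $q\notin\cutp$ provides (it also makes the Jacobi field $J$ with prescribed endpoint values exist and be unique). An equivalent route would be to integrate the matrix Riccati equation $A'+A^2+R_\gamma=0$ satisfied along $\gamma$ by the shape operator $A=\Hess d_p|_{(\gamma')^{\perp}}$ of the distance spheres, where $R_\gamma(\cdot)=R(\cdot,\gamma')\gamma'$ and $A(t)\sim\tfrac1t\,\mathrm{Id}$ as $t\to0^{+}$, and to compare it with the model solution $\tfrac{\mathrm{sn}_K'}{\mathrm{sn}_K}\,\mathrm{Id}$ by a Sturm/Riccati comparison; the two approaches are equivalent, the index–form one being the more elementary. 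Finally, the estimates above are pointwise on $M\setminus(\{p\}\cup\cutp)$, and they extend to all of $M$ in the barrier — hence distributional — sense: for $q\in\cutp$ one uses that $q$ does not lie in the cut locus of $\gamma(\eps)$ for $0<\eps<r$, so $\eps+d_{\gamma(\eps)}$ is smooth near $q$ and touches $d_p$ from above there, applies the estimate already proven to $d_{\gamma(\eps)}$ at $q$, and lets $\eps\to0$. This last viewpoint is the one relevant to the rest of the note.
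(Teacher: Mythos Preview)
The paper does not supply a proof of this theorem: it is stated as a classical result with a reference to Petersen's textbook, and the paper's actual contribution lies elsewhere (the structure of the distributional Hessian and the equivalence of the various weak notions in the Appendix). So there is no ``paper's own proof'' to compare against.

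Your argument is a correct and standard proof of the pointwise comparison via the index form and the index lemma, exactly the kind of argument one finds in the cited reference; the Riccati alternative you mention is equally standard. Your closing paragraph, extending the estimate to the cut locus via Calabi's barrier trick $d_p\leq\eps+d_{\gamma(\eps)}$, is also correct and is precisely what the paper alludes to when it says the comparison holds in barrier sense on all of $M$ (see the remark citing Section~9.3 of Petersen). Note, however, that the main body of the paper reaches the distributional inequality on $\cutp$ by a different route: rather than the barrier argument, it computes the singular part of $\Hess d_p$ explicitly as $-(\nu\otimes\nu)\,|\nabla d_p^{+}-\nabla d_p^{-}|_g\,\mathcal{H}^{n-1}\res\cutp$ and observes that this is nonpositive. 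Your approach is more elementary and self-contained for the purpose of the comparison inequalities; the paper's approach yields strictly more, namely the precise measure-theoretic structure of the Hessian.
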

It is often stated by several authors that these inequalities actually
hold on the whole manifold $(M,g)$, in some weak sense, that is, in
sense of distributions, of viscosity, of barriers. Such conclusion can
simplify and sometimes is actually necessary in global arguments
involving this comparison theorem, more in general, one often would
like to use (weak or strong) maximum principle for the Laplacian in
situations where the functions involved are not smooth, for instance,
in the proof of the ``splitting'' theorem (first proved by Cheeger and
Gromoll~\cite{chegro1}) by Eschenburg and Heintze~\cite{escheintz},
but also of Topogonov theorem and the ``soul'' theorem (Cheeger,
Gromoll and Meyer~\cite{chegro2,gromey}).\\
To be precise, we give the respective definitions of these
notions.

\begin{dfnz}\label{weakdef}
Let $A$ be a smooth, symmetric $(0,2)$--tensor field on a Riemannian manifold $(M,g)$.
\begin{itemize} 
\item We say that the function $f:M\to\R$ satisfies ${\mathrm{Hess}}\,
  f\leq A$ in {\em distributional sense} if for every smooth vector
  field $V$ with compact support there holds 
$\int_M  f\,\nabla^2_{ji}(V^iV^j)\,d{\mathrm{Vol}}\leq \int_M A_{ij}V^iV^j\,d{\mathrm{Vol}}$.
\item For a continuous function $f:M\to\R$, we say that
  ${\mathrm{Hess}}\, f\leq A$ at the point $p\in M$ in {\em barrier
    sense} if for every $\varepsilon>0$ there exists a 
neighborhood $U_\varepsilon$ of the point $p$ and a $C^2$--function
$h_\varepsilon:U_\varepsilon\to\R$ such that $h_\varepsilon(p)=f(p)$,
$h_\varepsilon\geq f$ in $U_\varepsilon$ and ${\mathrm{Hess}}\,
h_\varepsilon(p)\leq A(p)+\varepsilon g(p)$ as $(0,2)$--tensor fields (such a function
$h_\varepsilon$ is called an {\em upper barrier}).
\item For a continuous function $f:M\to\R$, we say that
  ${\mathrm{Hess}}\, f\leq A$ at the point $p\in M$ in {\em viscosity
    sense} if for every $C^2$--function $h$ from a neighborhood $U$ of
  the point $p$ such that $h(p)=f(p)$ and $h\leq f$ in $U$, we have
  ${\mathrm{Hess}}\, h(p)\leq A(p)$.
\end{itemize}
The weak notions of the inequality $\Delta f\leq\alpha$, for some
smooth function $\alpha:M\to\R$, are defined analogously.
\begin{itemize} 
\item We say that the function $f:M\to\R$ satisfies $\Delta f\leq\alpha$ in {\em distributional sense} if for every smooth, nonnegative 
function $\varphi:M\to\R$ with compact support there holds 
$\int_M  f\,\Delta\varphi\,d{\mathrm{Vol}}\leq \int_M \alpha\varphi\,d{\mathrm{Vol}}$.
\item For a continuous function $f:M\to\R$, we say that
  $\Delta f\leq\alpha$ at the point $p\in M$ in {\em barrier
    sense} if for every $\varepsilon>0$ there exists a 
neighborhood $U_\varepsilon$ of the point $p$ and a $C^2$--function
$h_\varepsilon:U_\varepsilon\to\R$ such that $h_\varepsilon(p)=f(p)$,
$h_\varepsilon\geq f$ in $U_\varepsilon$ and $\Delta h_\varepsilon(p)\leq \alpha(p)+\varepsilon$.
\item For a continuous function $f:M\to\R$, we say that
  $\Delta f\leq\alpha$ at the point $p\in M$ in {\em viscosity
    sense} if for every $C^2$--function $h$ from a neighborhood $U$ of
  the point $p$ such that $h(p)=f(p)$ and $h\leq f$ in $U$, we have
  $\Delta h(p)\leq\alpha(p)$.
\end{itemize}
\end{dfnz}

\medskip

\emph{In this definition and in the following of this paper we will
  use the Einstein summation convention on repeated indices. In
  particular, with the notation $\nabla^2_{ij}(V^i V^j)$ we mean
  $\nabla^2_{ij}{(V \otimes V)}^{ij}$; i.e., the function obtained by
  contracting twice the second covariant derivative of the tensor
  product $V \otimes V$.}

\medskip

The notion of inequality ``in barrier sense'' was defined by Calabi~\cite{calabi} (for the Laplacian)
back in 1958 (he used the terminology ``weak sense'' rather than
``barrier sense'') who also proved the relative global ``weak'' Laplacian
comparison theorem (see also the book of Petersen~\cite[Section~9.3]{petersen1}).\\
The notion of viscosity solution (which is connected to the definition of 
inequality ``in viscosity sense'', see  Appendix~\ref{appA}) 
was introduced by Crandall and Lions~\cite[Definition~3.2]{crisli3} for partial differential {\em
  equations}, the above definition for the Hessian is a generalization to a
very special {\em system} of PDEs.\\
The distributional notion is useful when integrations (by parts) are
involved, the other two concepts when the arguments are based on maximum
principle.\\
It is easy to see, by looking at the definitions, that ``barrier sense''
implies ``viscosity sense'', moreover, by the work~\cite{ishii2}, if
$f:M\to\R$ satisfies $\Delta f\leq \alpha$ in viscosity sense it also
satisfies $\Delta f\leq \alpha$ as distributions and viceversa. 
In the Appendix~\ref{appA} we will discuss in detail the relations between these
definitions.

In the next section we will describe the distributional
structure of the Hessian (and hence of the Laplacian) of $d_p$ which
will imply the mentioned validity of the above inequalities on the
whole manifold.

It is a standard fact that the function $d_p$ is smooth in the set
$M\setminus(\{p\}\cup\cutp)$, where $\cutp$ is the {\em cutlocus} of
the point $p$, which we are now going to define along with stating its basic
properties (we keep the books~\cite{gahula} and~\cite{sakai} as
general references). It is anyway well known that $\cutp$ is a closed
set of zero (canonical) measure. Hence, in the open set
$M\setminus(\{p\}\cup\cutp)$ the Hessian and Laplacian of $d_p$ are
the usual ones (even seen as distributions or using other weak
definitions) and all the analysis is concerned to what happens on
$\cutp$ (the situation at the point $p$ is straightforward as $d_p$ is
easily seen to behave as the function $\Vert x\Vert$ at the origin of
$\R^n$).

We let $U_p=\left\{v\in T_pM\,\vert \, g_p(v,v)=1 \right\}$ to be the
set of unit tangent vectors to $M$ at $p$. Given $v\in U_p$ we
consider the geodesic $\gamma_v(t)=\exp_p(tv)$ and we let
$\sigma_v\in\R^+$ (possibly equal to $+\infty$) to be the maximal time
such that $\gamma_v([0,\sigma_v])$ is minimal between any pair of its
points. It is so defined a map $\sigma:U_p\to\R^+\cup\{+\infty\}$ and
the point $\gamma_v(\sigma_v)$ (when $\sigma_v<+\infty$) is called the
{\em cutpoint} of the geodesic $\gamma_v$.

\begin{dfnz}
The set of all cutpoints $\gamma_v(\sigma_v)$ for $v\in U_p$ with 
$\sigma_v<+\infty$ is called the {\em cutlocus} of the point $p\in M$.
\end{dfnz}

The reasons why a geodesic ceases to be minimal are explained in the
following proposition.

\begin{prop}
If for a geodesic $\gamma_v(t)$ from the point $p\in M$ 
we have $\sigma_v<+\infty$, at least one of the 
following two (mutually non exclusive) conditions is satisfied:
\begin{enumerate}
\item at the cutpoint $q=\gamma_v(\sigma_vv)$ there arrives at least another minimal 
  geodesic from $p$,
\item the differential $d\exp_p$ is not invertible at the point $\sigma_vv\in T_pM$.
\end{enumerate}
Conversely, if at least one of these conditions is satisfied the 
geodesic $\gamma_v(t)$ cannot be minimal on an interval larger that
$[0,\sigma_v]$.
\end{prop}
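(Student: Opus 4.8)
The plan is to prove the two implications separately, each resting on the compactness of the unit sphere $U_p\subset T_pM$ together with the first and second variation formulas for the length functional. For the forward direction, I would first record that, by the very definition of $\sigma_v$, the set $\{t>0:d_p(\gamma_v(t))=t\}$ is closed, so the supremum defining $\sigma_v$ is attained and $\gamma_v\vert_{[0,\sigma_v]}$ is still minimal, whereas $d_p(\gamma_v(t))<t$ \emph{strictly} for every $t>\sigma_v$. Next, choose a sequence $t_i\downarrow\sigma_v$ and, for each $i$, a unit--speed minimal geodesic $\eta_i(s)=\exp_p(sw_i)$ with $w_i\in U_p$ joining $p$ to $\gamma_v(t_i)$, of length $\ell_i:=d_p(\gamma_v(t_i))<t_i$. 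By compactness pass to a subsequence with $w_i\to w\in U_p$; since $\ell_i\to d_p(\gamma_v(\sigma_v))=\sigma_v$ by continuity of $d_p$, letting $i\to\infty$ yields $\exp_p(\sigma_v w)=\gamma_v(\sigma_v)=:q$, with $s\mapsto\exp_p(sw)$ minimal on $[0,\sigma_v]$. If $w\neq v$ this is a second minimal geodesic from $p$ to $q$, i.e.\ alternative~(1). If instead $w=v$, I claim that $d\exp_p$ is singular at $\sigma_v v$: otherwise $\exp_p$ would be a local diffeomorphism near $\sigma_v v$, and for large $i$ the two vectors $t_i v$ and $\ell_i w_i$, both close to $\sigma_v v$, would satisfy $\exp_p(t_i v)=\gamma_v(t_i)=\exp_p(\ell_i w_i)$, forcing $t_i v=\ell_i w_i$ by injectivity; but $t_i=\Vert t_i v\Vert\neq\Vert\ell_i w_i\Vert=\ell_i$, a contradiction. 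Hence alternative~(2) holds.

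For the converse, assume that one of (1), (2) holds at $q=\gamma_v(\sigma_v)$ and, arguing by contradiction, that $\gamma_v\vert_{[0,t]}$ is minimal for some $t>\sigma_v$. Under~(1), pick a minimal geodesic $\eta$ from $p$ to $q$ different from $\gamma_v\vert_{[0,\sigma_v]}$; then the concatenation of $\eta$ with $\gamma_v\vert_{[\sigma_v,t]}$ has length $d_p(q)+(t-\sigma_v)=t=d_p(\gamma_v(t))$, hence is minimal, hence (after reparametrization) a smooth geodesic, and yet it has a genuine corner at $q$ since $\eta'(\sigma_v)\neq\gamma_v'(\sigma_v)$ — equality would force $\eta=\gamma_v\vert_{[0,\sigma_v]}$ by uniqueness of geodesics — which is absurd. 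Under~(2), the point $q$ is conjugate to $p$ along $\gamma_v$, and since $\sigma_v\in(0,t)$ the classical consequence of the second variation formula, that a geodesic is not minimal once it has passed a point conjugate to its initial endpoint (see e.g.~\cite{gahula,sakai}), again gives a contradiction.

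I expect the only genuinely delicate point to be the strict inequality $d_p(\gamma_v(t))<t$ for $t>\sigma_v$ — equivalently $\ell_i<t_i$ — because this is exactly what, in the case $w=v$, forces $\exp_p$ to fail to be injective near $\sigma_v v$ rather than merely recovering the geodesic $\gamma_v$ itself. Everything else is a routine compactness argument together with the standard facts that length--minimizing curves are smooth geodesics and that a geodesic stops minimizing past a conjugate point, which I would simply quote from the references already cited.
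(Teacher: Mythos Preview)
Your argument is correct and is precisely the classical proof one finds in the standard references the paper itself cites (\cite{gahula,sakai}). Note, however, that the paper does \emph{not} prove this proposition at all: it is stated in the introduction as background, immediately after the definition of the cutlocus, with no accompanying \texttt{proof} environment, and the reader is implicitly referred to the general references given a few lines earlier. So there is no ``paper's own proof'' to compare against; your write-up simply supplies the standard argument that the authors chose to omit.
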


It is well known that the subset of points $q\in\cutp$ where more than
a minimal geodesic from $p$ arrive coincides with $\Sing$, which is
the singular set of the distance function $d_p$ in
$M\setminus\{p\}$. We also define the set $\Conj$ of the points
$q=\gamma_v(\sigma_v)\in\cutp$ with $d\exp_p$ not invertible at
$\sigma_vv\in T_pM$, we call $\Conj$ the {\em locus of optimal
  conjugate points} (see~\cite{gahula,sakai}).

\medskip

\begin{ackn} 
We thank Giovanni Alberti, Luigi Ambrosio, Davide Lombardo and 
Federico Poloni for several valuable
suggestions. The first author is partially supported by the Italian project
FIRB--IDEAS ``Analysis and Beyond''.
\end{ackn}

\section{The Structure of the Distributional Hessian of the Distance Function}

The following properties of the function $d_p$ and of the cutlocus of $p\in M$ are proved in the paper~\cite{mant4}, Section~3 (see also the wonderful work~\cite{nirenli} for other fine properties, notably the local Lipschitzianity of the function $\sigma:U_p\to\R^+\cup\{+\infty\}$, in Theorem~1.1). 

\smallskip

Given an open set $\Omega\subset\rl^n$, we say that a continuous function $u:\Omega\to\R$ is {\em locally semiconcave} if, for any open
convex set $K\subset\Omega$ with compact closure in $\Omega$, the function $u\vert_K$ is the sum of a $C^2$ function with a concave function.\\
A continuous function $u:M \to\rl$ is called {\em locally semiconcave} if, for any local chart
$\chart:\real^n\lra U\subset M$, the function $u\composed\chart$
is locally semiconcave in $\R^n$ according to the above definition.

\begin{prop}[{Proposition~3.4 in~\cite{mant4}}]\label{carlosemicref}
The function $d_p$ is locally semiconcave in $M\setminus\{p\}$.
\end{prop}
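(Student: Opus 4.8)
The plan is to argue by a barrier construction. Since local semiconcavity is a local property, it is enough to fix an arbitrary $q\in M\setminus\{p\}$ and to find a ball $B$ centered at $q$ and a constant $C$ such that for every $y\in B$ there is a $C^{2}$ function $h_{y}$, defined on a ball of a fixed radius around $y$, with $h_{y}\geq d_{p}$, $h_{y}(y)=d_{p}(y)$ and $\Hess h_{y}\leq Cg$. Granted this, a first order Taylor expansion of $h_{y}$ at $y$ shows, in a local chart around $q$, that the function $d_{p}-\tfrac{C}{2}\abs{\,\cdot\,}^{2}$ admits, near each of its points, an affine function lying above it and agreeing with it there; a continuous function with this property is concave, hence $d_{p}$ is locally the sum of a smooth function and a concave one, that is, locally semiconcave near $q$. (When $q\notin\cutp$ the function $d_{p}$ is already smooth near $q$, so the construction is really needed only for $q\in\cutp$, but it applies verbatim in both cases.)

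To construct the barriers, fix $\varepsilon\in\bigl(0,\tfrac{1}{2}d_{p}(q)\bigr)$. For $y$ close to $q$, choose a unit--speed minimal geodesic $\gamma_{y}\colon[0,d_{p}(y)]\to M$ from $p$ to $y$ and set $z_{y}=\gamma_{y}\bigl(d_{p}(y)-\varepsilon\bigr)$, so that $d(z_{y},y)=\varepsilon$ and $d_{p}(y)=d_{p}(z_{y})+d(z_{y},y)$. By the triangle inequality, the function $h_{y}:=d_{p}(z_{y})+d_{z_{y}}(\,\cdot\,)$ satisfies $h_{y}\geq d_{p}$, with equality at $y$. Two points then have to be checked. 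First, that $d_{z_{y}}$ is $C^{\infty}$ near $y$: any minimal geodesic from $z_{y}$ to $y$ different from $\gamma_{y}\vert_{[d_{p}(y)-\varepsilon,\,d_{p}(y)]}$ would, concatenated with $\gamma_{y}\vert_{[0,\,d_{p}(y)-\varepsilon]}$, give a curve from $p$ to $y$ of length $d_{p}(y)$ having a corner, which is impossible for a length--minimal curve; and $y$ is not conjugate to $z_{y}$ along $\gamma_{y}$, since the index form on the arc $\gamma_{y}\vert_{[d_{p}(y)-\varepsilon,\,d_{p}(y)]}$ is the restriction of the positive semidefinite index form of the minimal arc $\gamma_{y}\vert_{[0,d_{p}(y)]}$, and such a restriction is actually positive definite (an element of its kernel, extended by zero, would be a Jacobi field vanishing on a subinterval, hence identically zero). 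By the characterization of cutpoints recalled above, $y$ is then not a cutpoint of $z_{y}$, so $y\in M\setminus(\{z_{y}\}\cup\mathrm{Cut}_{z_{y}})$ and $d_{z_{y}}$ is smooth in a neighborhood of $y$.

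The second, and more delicate, point is that the $C^{2}$ norm of $d_{z_{y}}$, in fixed charts around $q$, is bounded uniformly in $y$ on balls of a uniform radius. For this, observe that the points $z_{y}$ all lie in the compact set $\overline{B_{2\varepsilon}(q)}$, that $d(z_{y},y)\equiv\varepsilon$, and that any limit of pairs $(z_{y_{k}},y_{k})$ with $y_{k}\to y_{*}$ is again of the form $(z_{*},y_{*})$, with $z_{*}$ obtained by pulling some minimal geodesic from $p$ to $y_{*}$ back by the amount $\varepsilon$ (limits of minimal geodesics being minimal); hence, by the argument of the previous paragraph, every such limit pair lies in the open set $\Omega\subset M\times M$ on which $(z,w)\mapsto d_{z}(w)$ is smooth. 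Therefore $\{(z_{y},y):y\text{ near }q\}$ has compact closure inside $\Omega$, and since the distance is a smooth function of both of its arguments on $\Omega$, the relevant derivatives are bounded on that compact set by a constant $C$. Shrinking $B$ accordingly yields $h_{y}\in C^{2}$ with $\Hess h_{y}\leq Cg$ for every $y\in B$, completing the proof. The main obstacle is precisely this uniformity: the pointwise barrier construction and the passage to semiconcavity are routine, whereas the $\varepsilon$--independent Hessian bound hinges on the compactness of the family $\{(z_{y},y)\}$ inside the regularity set $\Omega$, and thus on the closedness of its complement in $M\times M$ and on the stability of minimal geodesics under limits.
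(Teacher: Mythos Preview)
Your argument is correct. The barrier construction via the ``Calabi trick'' (pulling the base point back by $\varepsilon$ along a minimal geodesic) is carried out carefully: the uniqueness of the minimal geodesic from $z_y$ to $y$ follows from the no--corner property of length minimizers, the absence of conjugate points is justified by your index--form argument (an element of the kernel, extended by zero, minimizes the positive--semidefinite index form on the full arc and hence satisfies the Euler--Lagrange equation, i.e.\ is a Jacobi field, forcing it to vanish), and the uniform Hessian bound is obtained from the compactness of $\{(z_y,y)\}$ inside the open regularity set $\Omega$ of the two--point distance, using stability of minimal geodesics under limits.

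The paper does not actually prove this proposition: it is quoted from~\cite{mant4}, and the only indication of method is the remark that it ``follows by recognizing $d_p$ as a viscosity solution of the eikonal equation $\vert\nabla u\vert=1$''. That route invokes the general regularity theory for viscosity solutions of first--order Hamilton--Jacobi equations with strictly convex Hamiltonian, which yields local semiconcavity as a black box. Your approach is genuinely different and more self--contained: it is purely Riemannian--geometric, avoids the PDE machinery, and makes the semiconcavity constant explicit in terms of the geometry of the exponential map near $q$. The trade--off is that the viscosity route generalizes immediately to distances from arbitrary closed sets (and to other Hamilton--Jacobi equations), whereas your barrier construction, as written, uses the single base point $p$ in an essential way when pulling back along~$\gamma_y$; extending it to distances from general closed subsets requires the boundary regularity hypotheses mentioned in the paper's final remark.
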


This fact, which follows by recognizing $d_p$ as a viscosity solution of the {\em eikonal equation} $\vert\nabla u\vert=1$ (see~\cite{mant4}), has some relevant consequences, we need some definitions for the precise statements.

\smallskip

Given a continuous function $u:\Omega \to \rl$ and a point $q\in M$, the
{\em superdifferential} of $u$ at $q$ is the subset of $T_q^*M$ defined by
$$
\supdiff u(q)=\left\{  d\testfun(q) \,|\,   \testfun\in C^1(M),
  \testfun(q)-u(q)=\min_M \testfun-u \right\}\,.
$$
For any locally Lipschitz function $u$, the set $\supdiff u(q)$ is a 
compact convex set, almost everywhere coinciding with
the differential of the function $u$, by Rademacher's theorem.

\begin{prop}[{Proposition~2.1 in~\cite{alambca}}]\label{supcontinu}
Let the function $u:M \to \rl$ be semiconcave, then the superdifferential 
$\supdiff u$ is not empty at each point, moreover, 
$\supdiff v$ is {\em upper semicontinuous}, namely
$$
q_k\lra q,\quad
  v_k\lra v,\quad  v_k\in \supdiff u(q_k)\quad \Longrightarrow \quad
  v\in \supdiff u(q).
$$
In particular, if the differential $du$ exists at {\em every} point
of $M$, then $u\in C^1(M)$.
\end{prop}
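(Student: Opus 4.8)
The plan is to reduce the statement to the local structure of semiconcave functions in a coordinate chart and then finish by elementary convex analysis. Fix $q\in M$, pick a chart $\chart\colon\real^n\to U\ni q$ and set $x=\chart^{-1}(q)$; by local semiconcavity there is a small open ball $K\ni x$ (with $\overline K$ compact in $\real^n$) such that $u\composed\chart=\phi+c$ on $K$ with $\phi\in C^2(K)$ and $c\colon K\to\R$ concave. The crucial point, which I would establish first, is that --- read through $\chart$ --- one has
\[
\supdiff u(q)=\nabla\phi(x)+\supdiff c(x),
\]
where $\supdiff c(x)=\{p\in\real^n:c(y)\le c(x)+\langle p,\,y-x\rangle\ \text{ for all } y\in K\}$ is the usual concave--analytic superdifferential. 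The inclusion $\supseteq$ comes from exhibiting explicit test functions (see the next paragraph); the inclusion $\subseteq$ holds because a $C^1$ function touching the concave function $c$ from above at $x$ has its differential in $\supdiff c(x)$ --- this is seen by comparing the difference quotients of such a function and of $c$ along rays issuing from $x$, letting the parameter tend to $0$, and using concavity to propagate the resulting supporting inequality to all of $K$.

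Next I would prove nonemptiness. Since $c$ is concave and $x$ lies in the interior of $K$, there is a supporting affine map, i.e.\ some $p\in\supdiff c(x)$; transporting $y\mapsto\phi(y)+c(x)+\langle p,\,y-x\rangle$ back through $\chart$ and restricting to a small ball $B\ni q$ gives a $C^2$ function $h$ with $h(q)=u(q)$ and $h\ge u$ on $B$. To upgrade $h$ to an admissible test function I would choose a smooth $G\colon M\to\R$ with $G>u+1$ on all of $M$ (such a $G$ exists on any paracompact manifold by a standard partition of unity argument, using continuity of $u$) and a cutoff $\eta\in C^\infty_c(M)$ with $\eta\equiv 1$ near $q$ and $\supp\eta\subset B$; then $\testfun:=\eta h+(1-\eta)G$ is $C^1$ on $M$, satisfies $\testfun\ge u$ everywhere and $\testfun(q)=u(q)$, so $\testfun-u$ attains its minimum over $M$ at $q$, while $d\testfun(q)=dh(q)=\nabla\phi(x)+p$ in the chart. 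This proves $\supdiff u(q)\neq\emptyset$ and, together with the previous paragraph, the identification $\supdiff u(q)=\nabla\phi(x)+\supdiff c(x)$; in particular $\supdiff u(q)$ is compact and convex, since $\supdiff c(x)$ is bounded (concave functions are locally Lipschitz) and is an intersection of closed half--spaces.

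Upper semicontinuity is then a limiting argument inside a fixed chart around $q$. For $k$ large, $q_k\in U$; writing $x_k=\chart^{-1}(q_k)\to x$, the hypothesis $v_k\in\supdiff u(q_k)$ means $v_k=\nabla\phi(x_k)+p_k$ with $p_k\in\supdiff c(x_k)$, i.e.\
\[
c(y)\le c(x_k)+\langle p_k,\,y-x_k\rangle\qquad\text{for all }y\in K.
\]
Since $\phi\in C^2$, the vectors $\nabla\phi(x_k)$ converge to $\nabla\phi(x)$, hence $p_k\to p:=v-\nabla\phi(x)$; passing to the limit $k\to\infty$ in the displayed inequality and using continuity of $c$ yields $c(y)\le c(x)+\langle p,\,y-x\rangle$ for all $y\in K$, that is $p\in\supdiff c(x)$, whence $v=\nabla\phi(x)+p\in\supdiff u(q)$.

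Finally, the $C^1$ conclusion follows formally. Assume $du$ exists at every point. If $\testfun\in C^1(M)$ is such that $\testfun-u$ is minimized over $M$ at $q$, then $\testfun-u$ is differentiable at $q$ with a minimum there, so $d\testfun(q)=du(q)$; combined with nonemptiness this forces $\supdiff u(q)=\{du(q)\}$ for every $q$. Then, given $q_k\to q$, the covectors $du(q_k)$ are bounded in a chart near $q$ (semiconcave functions are locally Lipschitz), and by the upper semicontinuity just established every subsequential limit of $du(q_k)$ lies in $\supdiff u(q)=\{du(q)\}$; hence $du(q_k)\to du(q)$, so $du$ is continuous and $u\in C^1(M)$. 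The one point in the argument that is not purely formal is reconciling the ``global minimum over $M$'' in the definition of $\supdiff u$ with the strictly local nature of semiconcavity; this is exactly what the cutoff construction in the nonemptiness step takes care of, and everything else is standard convex analysis.
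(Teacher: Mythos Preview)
The paper does not supply its own proof of this proposition; it is simply quoted from another source (Proposition~2.1 in the cited reference), so there is no in--paper argument to compare against.

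Your proof is the standard one and is correct. The reduction to a chart, the splitting $u\circ\chart=\phi+c$ with $\phi\in C^2$ and $c$ concave, the identification $\supdiff u(q)=\nabla\phi(x)+\supdiff c(x)$, and the passage to the limit in the supporting inequality are exactly the ingredients one expects; the cutoff/globalization trick to reconcile the ``$\min_M$'' in the definition of $\supdiff u$ with the purely local decomposition is the right way to handle that issue. Two small remarks. First, when you write $\supdiff u(q)=\nabla\phi(x)+\supdiff c(x)$ you are implicitly identifying $T_q^*M$ with $(\R^n)^*$ via the pullback $(d\chart_x)^*$; this is harmless but worth saying once, since the definition of $\supdiff u$ is intrinsic. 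Second, in the $\subseteq$ direction you only know a priori that the $C^1$ test function $\testfun$ satisfies $\testfun-u\ge \testfun(q)-u(q)$ globally, not that it touches $u$ from above; the clean way to phrase your argument is to set $g:=\testfun\circ\chart-\phi-\bigl(\testfun(q)-u(q)\bigr)$, which then genuinely satisfies $g(x)=c(x)$ and $g\ge c$ on $K$, after which your directional--limit computation goes through verbatim.
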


\begin{prop}[{Remark~3.6 in~\cite{alambca}}]\label{estremali} The set $Ext(\partial^+d_p(q)$ of
extremal points of the (convex) superdifferential set of 
$d_p$ at $q$ is in one--to--one correspondence with the family
${\mathcal G}(q)$ of minimal geodesics from $p$ to $q$. Precisely
${\mathcal G}(q)$ is described by
$$
{\mathcal G}(q)=\Bigl\{\,\exp_q(-vt),\,\, \text{$t\in[0,1]$}\,\,\Bigr|\,\,\text{$\forall v\in Ext(\partial^+d_p(q)$}\,\Bigr\}\,.
$$
\end{prop}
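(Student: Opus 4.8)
The plan is to establish the correspondence by relating both sides to the superdifferential of $d_p$ at $q$ through the first-variation formula for arc length. First I would recall that if $\gamma\colon[0,1]\to M$ is a minimal geodesic from $p$ to $q$, parametrized proportionally to arc length with $\abs{\dot\gamma}=d_p(q)$, then for any curve ending at $q$ the first variation of length shows that the covector $v_\gamma:=g_q(\dot\gamma(1),\,\cdot\,)/\abs{\dot\gamma(1)}=-g_q(\dot\gamma(1),\,\cdot\,)^{\flat}$ (the unit covector dual to the incoming velocity, with the sign chosen so that it is an ascending direction of $d_p$) lies in $\supdiff d_p(q)$: indeed near $q$ one can build a smooth function majorizing $d_p$ with that differential, for instance $d_p$ composed appropriately, or directly $x\mapsto d_p(q)+\langle v_\gamma, \exp_q^{-1}(x)\rangle$ plus a quadratic correction; semiconcavity (Proposition~\ref{carlosemicref}) guarantees this works. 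So the map $\gamma\mapsto v_\gamma$ sends $\mathcal G(q)$ into $\supdiff d_p(q)$, and since distinct minimal geodesics have distinct incoming directions (two minimizers with the same velocity at $q$ coincide by uniqueness of geodesics), this map is injective.

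Next I would show each such $v_\gamma$ is an extremal point of the convex set $\supdiff d_p(q)$, and conversely that every extremal point arises this way. For the first direction, note $\abs{v_\gamma}_{g_q}=1$, so $v_\gamma$ lies on the unit sphere of $T_q^*M$; but $\supdiff d_p(q)$ is contained in the closed unit ball (as $d_p$ is $1$-Lipschitz, every element of the superdifferential has norm $\le 1$), and a point of norm $1$ in a convex subset of the unit ball is necessarily extremal. For the converse, I would use the general fact that the extremal points of $\supdiff u(q)$ for a semiconcave $u$ are exactly the \emph{reachable} (limiting) gradients, i.e.\ limits $\lim_k du(q_k)$ with $q_k\to q$ and $u$ differentiable at $q_k$; this is Rademacher plus the upper semicontinuity of Proposition~\ref{supcontinu} together with the fact that for semiconcave functions $\supdiff u(q)$ is the convex hull of these limiting gradients. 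Then I would invoke the known description of limiting gradients of $d_p$: if $q_k\to q$ with $q_k$ a smooth point of $d_p$, then $du(q_k)=-g_{q_k}(\dot\gamma_k(1),\,\cdot\,)^\flat$ for the unique minimal geodesic $\gamma_k$ from $p$ to $q_k$; by compactness (completeness of $M$ and Arzelà–Ascoli on the geodesics) a subsequence converges to a minimal geodesic $\gamma$ from $p$ to $q$, and the limiting gradient is $v_\gamma$. Hence every extremal point of $\supdiff d_p(q)$ is of the form $v_\gamma$ for some $\gamma\in\mathcal G(q)$, completing the bijection.

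Finally, to get the displayed formula I would just unwind the identification: given $v=v_\gamma\in Ext(\supdiff d_p(q))$, the geodesic $\gamma$ it comes from satisfies $\dot\gamma(1)=-v^\sharp$ in the sense that $\gamma$ runs from $p$ to $q$ and arrives at $q$ with velocity $-v^\sharp$ (up to the normalization by $d_p(q)$); reparametrizing so that the geodesic is $t\mapsto\exp_q(-tv^\sharp)$ for $t\in[0,d_p(q)]$, or after rescaling $t\in[0,1]$ with the understanding that $v$ is the unit covector, recovers exactly $\exp_q(-vt)$. I would remark that the slight abuse $\exp_q(-vt)$ means $\exp_q(-t\,v^\sharp)$ with the metric identification $T_q^*M\cong T_qM$, and that reversing orientation turns the minimal geodesic from $p$ to $q$ into one from $q$ to $p$, which is why the formula describes $\mathcal G(q)$ (minimal geodesics from $p$ to $q$) traced backwards from $q$.

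The main obstacle I expect is the converse inclusion in the second paragraph — proving that \emph{every} extremal point of the superdifferential is a limiting gradient coming from an honest minimal geodesic, rather than some spurious covector. This rests on the structural fact (for semiconcave functions) that $\supdiff u(q)=\overline{\operatorname{conv}}\,\{\text{limiting gradients at }q\}$, which one must either cite from semiconcavity theory (e.g.\ Cannarsa–Sinestrari) or reprove using Proposition~\ref{supcontinu}; and then the compactness argument identifying limiting gradients of $d_p$ with incoming velocities of minimal geodesics requires care that no minimality is lost in the limit (this is where completeness of $(M,g)$ and the closedness of the set of minimal geodesics enter). Everything else is the first-variation formula and elementary convex geometry of the unit ball.
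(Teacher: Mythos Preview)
The paper does not supply its own proof of this proposition: it is quoted verbatim as Remark~3.6 from~\cite{alambca} and used as a black box. So there is no argument in the paper to compare against.

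Your outline is the standard one and is correct in substance. The four ingredients --- (i) each minimal geodesic $\gamma$ produces a unit covector $v_\gamma\in\supdiff d_p(q)$ via the first variation of arclength; (ii) distinct geodesics give distinct covectors by uniqueness of geodesics with given initial data; (iii) unit covectors in a convex subset of the closed unit ball are extremal by strict convexity of the ball; (iv) conversely, for a semiconcave function $\supdiff u(q)$ is the closed convex hull of the set of reachable (limiting) gradients, so every extremal point is a reachable gradient, and reachable gradients of $d_p$ are exactly incoming velocities of minimal geodesics by Arzel\`a--Ascoli and lower semicontinuity of length --- are precisely how this is proved in the source you would be citing (Alberti--Ambrosio--Cannarsa, or Cannarsa--Sinestrari). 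You have also correctly flagged step~(iv) as the only nontrivial one; the identity $\supdiff u(q)=\overline{\mathrm{conv}}\{\text{reachable gradients}\}$ for semiconcave $u$ is the structural input, and Milman's converse to Krein--Milman then gives $\mathrm{Ext}(\supdiff u(q))\subset\{\text{reachable gradients}\}$ since the latter set is compact.

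One cosmetic remark: your displayed definition of $v_\gamma$ has a stray minus sign and a confused equality; the gradient of $d_p$ at a smooth point $q$ is $+\dot\gamma(1)/\lvert\dot\gamma(1)\rvert$ (pointing away from $p$), and the covector is its metric dual without a sign flip. The minus sign belongs only inside the exponential map in the final formula, where you correctly place it.
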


We now deal with structure of the cutlocus of $p\in M$. Let $\H^{n-1}$ denote the $(n-1)$--dimensional Hausdorff measure on $(M,g)$ (see~\cite{fede,simon}).

\begin{dfnz} We say that a subset $S\subset M$ is $C^r$--{\em
    rectifiable}, with $r\geq1$, if it can be covered by a countable family of
  embedded $C^r$--submanifolds of dimension $(n-1)$, with the
  exception of a set of $\H^{n-1}$--zero measure 
  (see~\cite{fede,simon} for a complete discussion of the notion of
  rectifiability).
\end{dfnz}

\begin{prop}[{Theorem~4.10 in~\cite{mant4}}] The cutlocus of $p\in M$ is $C^{\infty}$--rectifiable. Hence, its Hausdorff dimension is at most $n-1$. Moreover, for any compact subset $K$ of $M$ the measure $\H^{n-1}(\cutp\cap K)$ is finite (Corollary~1.3 in~\cite{nirenli}). 
\end{prop}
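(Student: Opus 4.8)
The plan is to prove the $C^\infty$--rectifiability of $\cutp$ by stratifying the cutlocus according to which of the two obstructions (multiple minimal geodesics vs. conjugate points) occurs, handling the two resulting pieces by different arguments. The underlying splitting is $\cutp = \Sing \cup \Conj$, where $\Sing$ is the set of points reached by at least two distinct minimal geodesics from $p$ and $\Conj$ is the set of optimal conjugate points, and the two sets overlap only in a negligible way.

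First I would treat $\Sing$. Here the key input is Proposition~\ref{carlosemicref}, the local semiconcavity of $d_p$ on $M\setminus\{p\}$: a locally semiconcave function is, in charts, a $C^2$ function plus a concave function, so its singular set (the set where the superdifferential is not a singleton) inherits the structure of the non-differentiability set of a convex function. One then invokes the classical results on the fine structure of the singular set of semiconcave (equivalently, up to sign, convex) functions — as developed by Alberti, Ambrosio and Cannarsa — which say precisely that this set is $C^\infty$--rectifiable, indeed covered up to an $\H^{n-1}$--null set by countably many $(n-1)$--dimensional submanifolds, with the dimension controlled by the rank of the ``jump'' in the superdifferential. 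Concretely, for $k=1,\dots,n$ let $\Sigma^k$ be the set of points $q$ where the convex set $\supdiff d_p(q)$ has dimension $k$; then $\Sigma^k$ is contained in a countable union of $C^\infty$ (or at least $C^{1,1}$, which suffices for rectifiability after a further decomposition) submanifolds of dimension $n-k$, and $\Sing = \bigcup_{k\ge 1}\Sigma^k$, so in particular $\Sing$ is $C^\infty$--rectifiable of dimension at most $n-1$.

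Next I would treat the conjugate locus $\Conj \setminus \Sing$, i.e. the optimal conjugate points reached by a \emph{unique} minimal geodesic. At such a point $q = \exp_p(\sigma_v v)$ the differential $d\exp_p$ is singular at $\sigma_v v$, so $q$ lies in the image under $\exp_p$ of the critical set of $\exp_p$ in $T_pM$. The critical set $\{ w \in T_pM : d\exp_p \text{ is singular at } w\}$ is, away from points of corank $\ge 2$, a smooth hypersurface in $T_pM$ by the constant-rank/implicit function theorem, and the corank-$\ge 2$ stratum is itself of dimension $\le n-2$; pushing forward by the smooth map $\exp_p$ and using the Lipschitz (indeed smooth) bounds on $\exp_p$ together with the local Lipschitzianity of $\sigma$ from~\cite{nirenli}, one controls $\H^{n-1}$ of the image and covers $\Conj$, modulo an $\H^{n-1}$--null set, by countably many $(n-1)$--dimensional $C^\infty$ submanifolds. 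Finally, $\cutp = \Sing \cup \Conj$ and a countable union of $C^\infty$--rectifiable sets is $C^\infty$--rectifiable, giving the first assertion; the bound $\dim_{\H}\cutp \le n-1$ is then immediate, and the local finiteness $\H^{n-1}(\cutp \cap K) < \infty$ is quoted directly from Corollary~1.3 in~\cite{nirenli}.

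The main obstacle I expect is the careful bookkeeping in the $\Sing$ part: reducing the statement about $d_p$ to the Alberti--Ambrosio--Cannarsa theory requires checking that the ``$C^2$ plus concave'' decomposition is only local (valid on convex charts), so the stratification and the submanifold covers must be assembled from countably many local pieces and then patched together, and one must be slightly careful that the regularity claimed (here $C^\infty$) matches what the semiconcavity structure actually delivers — the singular-set submanifolds of a merely semiconcave function are a priori only $C^{1,1}$, so to reach $C^\infty$ one needs to exploit that away from $\Conj$ the function $d_p$ is genuinely smooth (since $d\exp_p$ is invertible there), which is exactly why the $\Sing$/$\Conj$ splitting is the right one. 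In other words, the delicate point is that $\Sing \setminus \Conj$ consists of smooth hypersurfaces because $d_p$ restricted to either side of the ``ridge'' is smooth, and only on $\Conj$ does one fall back on the coarser rectifiability coming from the critical-value argument.
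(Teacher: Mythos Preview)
The paper does not actually give a proof of this proposition: it is stated with external citations (Theorem~4.10 in~\cite{mant4} for the $C^\infty$--rectifiability and Corollary~1.3 in~\cite{nirenli} for the local finiteness of $\H^{n-1}\res\cutp$) and no argument, so there is no ``paper's own proof'' to compare against. What you have written is a plausible reconstruction of the argument behind the cited result in~\cite{mant4}, and indeed the decomposition $\cutp=\Sing\cup\Conj$, the use of semiconcavity plus the Alberti--Ambrosio--Cannarsa singular--set theory for $\Sing$, and a Sard--type/rank analysis of the first--conjugate--point map for $\Conj$ are exactly the ingredients that paper uses (and that the present paper echoes in the proof of Theorem~\ref{smoo} via the map $F(v)=\exp_p(c_vv)$).

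Your sketch is sound in outline, and you correctly flag the one genuine subtlety: the structure theory for singular sets of semiconcave functions yields only $C^2$-- (or $C^{1,1}$--) rectifiability a priori, so the upgrade to $C^\infty$ on $\Sing\setminus\Conj$ really does require the observation that $d_p$ is smooth on either side of the ridge there, exactly as you say. One small point to tighten in the $\Conj$ part: it is not enough to look at the full critical set of $\exp_p$ in $T_pM$ and push it forward; you must restrict to the graph $\{\sigma_v v:v\in U_p\}$ (or to $\{c_v v\}$ as in the paper), and here the local Lipschitzianity of $\sigma$ (or the smoothness of $c$ on the rank--$(n-1)$ stratum, as in the proof of Theorem~\ref{smoo}) is what makes the image $(n-1)$--rectifiable rather than merely an $\H^{n-1}$--null set of critical values. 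With that caveat, your proposal is correct.
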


To explain the following consequence of such rectifiability, we need to introduce briefly the theory 
of functions with {\em bounded variation}, see~\cite{ambfupa,braides,fede,simon} for
details. We say that a function $u:\R^n\to\R^m$ is a function with
{\em locally bounded variation}, that is, $u\in BV_{\mathrm {loc}}$, if its distributional
derivative $Du$ is a Radon measure. Such notion can be easily extended 
to maps between manifolds using smooth local charts.\\
A standard result says that the derivative of a locally
semiconcave function stays in $BV_{\mathrm {loc}}$, in view of
Proposition~\ref{carlosemicref} this implies that the vector field
$\nabla d_p$ belongs to $BV_{\mathrm {loc}}$ in the open set $M\setminus\{p\}$.

Then, we define the subspace of $BV_{\mathrm {loc}}$ of functions (or vector
fields, as before) with locally {\em special bounded variation}, called
$SBV_{\mathrm {loc}}$ (see~\cite{amb1,amb3,amb4,ambfupa,braides}).\\
The Radon measure representing the distributional  derivative $Du$ of
a function $u:\R^n\to\R^m$ with locally bounded variation can be
always uniquely separated in three mutually singular measures
\begin{equation*}
Du=\widetilde{Du}+\mathrm{J}u+\mathrm{C}u
\end{equation*}
where the first term is the part absolutely continuous with respect to 
the Lebesgue measure ${\mathcal{L}}^n$, $\mathrm{J}u$ is a measure concentrated
on an $(n-1)$--rectifiable set and $\mathrm{C}u$ (called the {\em Cantor part}) 
is a measure which does not charge the subsets of Hausdorff dimension
$(n-1)$.\\
The space $SBV_{\mathrm {loc}}$ is defined as the class of functions $u\in
BV_{\mathrm {loc}}$ such that $\mathrm{C}u=0$, that is, the Cantor part of the
distributional derivative of $u$ is zero. Again, by means of the local charts, this notion is easily 
generalized to Riemannian manifolds.

\begin{prop}[{Corollary~4.13 in~\cite{mant4}}] The (${\mathcal{H}}^{n-1}$--almost everywhere defined) measurable unit vector field
  $\nabla d_p$ belongs to the space $SBV_{\mathrm {loc}}(M\setminus\{p\})$ of
  vector fields with locally special bounded variation.
\end{prop}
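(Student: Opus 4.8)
The plan is to obtain the vanishing of the Cantor part of the derivative of $\nabla d_p$ from two facts already in hand: the local semiconcavity of $d_p$ and the $\H^{n-1}$--finiteness of the cutlocus on compact sets. By Proposition~\ref{carlosemicref} the distance function $d_p$ is locally semiconcave on $M\setminus\{p\}$, hence, as recalled above, the vector field $\nabla d_p$ lies in $BV_{\mathrm {loc}}(M\setminus\{p\})$. Since both the notion $BV_{\mathrm {loc}}$ and the decomposition $Du=\widetilde{Du}+\mathrm{J}u+\mathrm{C}u$ are stable under composition with $C^2$--diffeomorphisms and under multiplication by $C^1$--functions — the latter being what lets one pass, inside a chart, between the partial derivatives of $d_p\comp\chart$ and the components of the vector field $\nabla d_p$ — it is enough to fix a local chart $\chart\colon\R^n\to U\subset M\setminus\{p\}$ with $\overline U$ compact and to prove that there $\mathrm{C}(\nabla d_p)=0$.

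First we would localize the singular part of $D(\nabla d_p)$ on the cutlocus. On the open set $\Omega:=M\setminus(\{p\}\cup\cutp)$ the function $d_p$ is smooth, hence so is $\nabla d_p$, and therefore, read in the chart, the distributional derivative of $\nabla d_p$ restricted to $\Omega$ is a matrix of smooth functions times $\mathcal L^n$, in particular absolutely continuous. Since the Lebesgue decomposition of a Radon measure commutes with restriction to open sets, the singular part $D^s(\nabla d_p)=\mathrm{J}(\nabla d_p)+\mathrm{C}(\nabla d_p)$ obeys $\abs{D^s(\nabla d_p)}(\Omega)=0$; in particular the Cantor component $\mathrm{C}(\nabla d_p)$ is concentrated on $\cutp$.

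Then we would bring in the smallness of the cutlocus. By the rectifiability statement recalled above, $\H^{n-1}(\cutp\cap\overline U)<\infty$, so in the chart $\cutp$ is a set of finite $\H^{n-1}$--measure. By the very definition of the Cantor part of a $BV_{\mathrm {loc}}$ derivative, $\mathrm{C}(\nabla d_p)$ gives zero mass to every Borel set of finite — more generally, $\sigma$--finite — $\H^{n-1}$--measure, so $\abs{\mathrm{C}(\nabla d_p)}(\cutp)=0$. Together with the previous step, $\mathrm{C}(\nabla d_p)$ is a locally finite measure that is concentrated on $\cutp$ and yet gives no mass to $\cutp$: hence it vanishes. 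As the chart was arbitrary, $\nabla d_p\in SBV_{\mathrm {loc}}(M\setminus\{p\})$, the surviving jump part $\mathrm{J}(\nabla d_p)$ being automatically carried by the $C^\infty$--rectifiable set $\cutp$.

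Two points call for the usual routine verifications: that the $SBV_{\mathrm {loc}}$ condition does not depend on the chosen charts — so that the statement on $M$ is meaningful and may be checked chart by chart — which follows from the stability of the three--part decomposition mentioned above together with the local bi--Lipschitz character of charts for the relevant distances (whence finiteness of $\H^{n-1}$--measure is preserved); and that the gradient of a locally semiconcave function is $BV_{\mathrm {loc}}$, which rests on the one--sided bound furnished by semiconcavity, namely that in a chart the matrix of second partial derivatives of the function, viewed as a matrix--valued measure, is bounded above by a constant multiple of $\mathcal L^n$. The one genuinely essential ingredient, beyond soft measure theory, is the finiteness of $\H^{n-1}(\cutp\cap K)$ on compact sets: the bare dimensional bound $\dim_{\H}\cutp\le n-1$ would not be enough to kill the Cantor part, since a set of Hausdorff dimension $n-1$ can fail to be $\sigma$--finite with respect to $\H^{n-1}$, and it is precisely this $\sigma$--finiteness of the cutlocus that closes the argument.
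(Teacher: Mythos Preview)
The paper does not actually give a proof of this proposition: it is stated with attribution (Corollary~4.13 in~\cite{mant4}) and no argument is supplied. What the paper \emph{does} do, in the paragraphs immediately preceding the statement, is lay out precisely the ingredients you use --- the local semiconcavity of $d_p$ (Proposition~\ref{carlosemicref}) giving $\nabla d_p\in BV_{\mathrm{loc}}$, the smoothness of $d_p$ on $M\setminus(\{p\}\cup\cutp)$, the local finiteness of $\H^{n-1}\res\cutp$ on compact sets, and the decomposition $Du=\widetilde{Du}+\mathrm{J}u+\mathrm{C}u$ together with the vanishing of the Cantor part on sets of $\sigma$--finite $\H^{n-1}$--measure. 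Your proposal is exactly the correct assembly of these ingredients into a proof, and is presumably the argument behind the cited corollary.

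One remark: your final paragraph is sharper than the paper's own exposition. The paper describes $\mathrm{C}u$ loosely as a measure ``which does not charge the subsets of Hausdorff dimension $(n-1)$'', whereas you correctly insist that the operative property is vanishing on $\H^{n-1}$--$\sigma$--finite sets, and that the dimensional bound $\dim_{\H}\cutp\le n-1$ alone would not suffice. This is a genuine point and worth keeping.
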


The immediate consequence of this proposition is that the $(0,2)$--tensor field valued distribution ${\mathrm{Hess}}\,d_p$ is actually a Radon measure with an absolutely continuous part, with respect to the canonical volume measure ${\mathrm{Vol}}$ of $(M,g)$, concentrated in 
$M\setminus(\{p\}\cup \cutp)$ where $d_p$ is a smooth function, hence in this set ${\mathrm{Hess}}\,d_p$ coincides with the standard Hessian ${\widetilde{\mathrm{Hess}}}\, d_p$ times the volume measure ${\mathrm{Vol}}$. 
When the dimension of $M$ is at least two, the singular part of the measure ${\mathrm{Hess}}\,d_p$ does not ``see'' the singular point $p$, hence, it is concentrated on $\cutp$, absolutely continuous with respect to the Hausdorff measure ${\mathcal{H}}^{n-1}$, restricted to $\cutp$.

By the properties of rectifiable sets, at ${\mathcal{H}}^{n-1}$--almost every point $q\in\cutp$ there exists an $(n-1)$--dimensional {\em approximate tangent space} $\apT_q\cutp\subset T_qM$ (in the sense of geometric measure theory, see~\cite{fede,simon} for details). To give an example, we say that an hyperplane $T\subset\R^n$ is the approximate tangent space to an $(n-1)$--dimensional rectifiable set $K\in\R^n$ at the point $x_0$, if ${\mathcal{H}}^{n-1}\res T$ is the limit in the sense of Radon measures, as $\rho\to+\infty$, of the blow--up measures ${\mathcal{H}}^{n-1}\res\rho(K-x_0)$ around the point $x_0$. With some technicalities, this notion can be extended also to Riemannian manifolds.\\
Moreover, see~\cite{ambfupa}, at ${\mathcal{H}}^{n-1}$--almost every point $q\in\cutp$, the field $\nabla d_p$ has two distinct {\em approximate} (in the sense of Lebesgue differentiation theorem) limits ``on the two sides'' of $\apT_q\cutp\subset T_qM$, given by $\nabla d^+_p$ and $\nabla d^-_p$.

We want to see now that at ${\mathcal{H}}^{n-1}$--almost
every point of $\cutp$ there arrive exactly two distinct geodesics and
no more. We underline that a stronger form of this theorem was already
obtained in~\cite{ardgui} and~\cite{figrifvil}, concluding that the
set $\cutp\setminus U$ (where $U$ is like in the following statement) actually has Hausdorff dimension not greater that $n-2$.

\begin{teo}\label{smoo}
There is an open set $U\subset M$ such that ${\mathcal{H}}^{n-1}(\cutp\setminus U)=0$ and 
\begin{itemize}
\item the subset $\cutp\cap U$ does not contain conjugate points, hence the set of optimal conjugate points has ${\mathcal{H}}^{n-1}$--zero measure;
\item at every point of $\cutp\cap U$ there arrive exactly two minimal geodesics from $p\in M$;
\item locally around every point of $\cutp\cap U$ the set $\cutp$ is a smooth $(n-1)$--dimensional 
hypersurface, hence, $\apT_q\cutp$ is actually the classical tangent space to a hypersurface.
\end{itemize}
\end{teo}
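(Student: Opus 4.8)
The plan is to construct $U$ explicitly, to check the three properties on $\cutp\cap U$ by an implicit--function argument, and then to prove that the residue $\cutp\setminus U$ is $\H^{n-1}$--negligible, a task which reduces to controlling the ``triple'' cut points and the optimal conjugate locus, the latter being the main obstacle. Call a point $q\in\cutp$ \emph{good} if exactly two minimal geodesics $\gamma^\pm$ from $p$ reach $q$ and neither is conjugate at $q$. For such a $q$ the map $\exp_p$ is a diffeomorphism from a neighbourhood of each of the two points $d_p(q)\,v^\pm\in T_pM$ (with $v^\pm$ the arrival velocities of $\gamma^\pm$) onto a common neighbourhood $W_q$ of $q$, so each sheet produces a smooth function $d^\pm$ on $W_q$ with $d^\pm(q)=d_p(q)$ and $\nabla d^\pm(q)=v^\pm$. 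A compactness argument --- every sequence of minimal geodesics reaching points $q_k\to q$ of $W_q$ subconverges, after completeness, to a minimal geodesic reaching $q$, hence to $\gamma^+$ or $\gamma^-$, each of which carries a \emph{unique} nearby minimal geodesic because $\exp_p$ is injective there --- shows that, after shrinking $W_q$, every point of $W_q$ has at most two minimal geodesics and $d_p=\min(d^+,d^-)$ on $W_q$. Since $v^+\neq v^-$ we have $\nabla(d^+-d^-)(q)\neq0$, so a further shrinking makes $\cutp\cap W_q=\{d^+=d^-\}$ a smooth embedded $(n-1)$--hypersurface, every point of which is again good. Thus $U:=\bigcup\{W_q:q\ \text{good}\}$ is open, $\cutp\cap U$ equals exactly the set of good cut points, it contains no conjugate point, and along it $\cutp$ is a smooth hypersurface with $\apT_q\cutp$ its classical tangent space; hence all three bullets hold on $\cutp\cap U$.

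It then remains to prove $\H^{n-1}(\cutp\setminus U)=0$, from which the last assertion of the statement, $\H^{n-1}(\Conj)=0$, follows since $\Conj\subseteq\cutp\setminus U$. If $q\in\cutp\setminus U$ then $q$ is not good: either at least three minimal geodesics reach $q$, or exactly one does (and then $q$, being a cut point with a single minimal geodesic, must be conjugate along it by the Proposition on the reasons a geodesic ceases to be minimal, so $q\in\Conj$), or exactly two do with at least one conjugate ($q\in\Conj$ again). Hence $\cutp\setminus U\subseteq\Conj\cup\cutp^{\geq3}$, where $\cutp^{\geq3}$ is the set of cut points reached by at least three minimal geodesics. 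For $\cutp^{\geq3}\setminus\Conj$ the local model above applies with finitely many non--conjugate sheets $d_1,\dots,d_m$, $m\geq3$, having distinct unit gradients $v_i=\nabla d_i(q)$ and $d_p=\min_i d_i$ near $q$; thus $\cutp^{\geq3}$ lies locally in $\bigcup_{i<j<k}\{d_i=d_j=d_k\}$. Any three distinct unit vectors are affinely independent --- if $v_i-v_j$ were proportional to $v_i-v_k$ then $v_i$ would be an affine combination of $v_j$ and $v_k$ which, all three being unit vectors, forces two of them to coincide --- so each set $\{d_i=d_j=d_k\}$ is near $q$ the zero set of a submersion into $\R^2$, hence a codimension--$2$ submanifold and $\H^{n-1}$--null; a countable cover gives $\H^{n-1}(\cutp^{\geq3}\setminus\Conj)=0$, and $\cutp^{\geq3}\cap\Conj\subseteq\Conj$.

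Everything is therefore reduced to $\H^{n-1}(\Conj)=0$, which is the heart of the matter. Here I would use the local Lipschitz continuity of $\sigma$ (Theorem~1.1 in~\cite{nirenli}): the tangent cut locus $\widetilde\cutp=\{\sigma(v)\,v:v\in U_p,\ \sigma(v)<\infty\}$ is then a locally Lipschitz $(n-1)$--dimensional graph over $U_p$, the map $\Psi\colon v\mapsto\exp_p(\sigma(v)\,v)$ is locally Lipschitz with $\Psi(\widetilde\cutp)=\cutp$, and $\Conj=\Psi(\Sigma)$ where $\Sigma=\{v:d(\exp_p)_{\sigma(v)v}\ \text{is singular}\}$ (recall that, the cut point occurring no later than the first conjugate point, a cut point is conjugate precisely when it \emph{is} the first conjugate point along its geodesic). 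By the area formula $\H^{n-1}(\Conj)\le\int_\Sigma J\Psi\,d\H^{n-1}$, with $J\Psi$ the tangential Jacobian of $\Psi$ along $\widetilde\cutp$, so it suffices to show that $J\Psi$ vanishes at $\H^{n-1}$--almost every $v\in\Sigma$. By Rademacher's theorem $\sigma$ is differentiable $\H^{n-1}$--a.e.\ on $U_p$, and at such a $v\in\Sigma$ one computes $d\Psi_v(u)=\sigma(v)\,d(\exp_p)_w(u)+\langle d\sigma_v,u\rangle\,\dot\gamma_v(\sigma(v))$ for $u\in v^\perp=T_vU_p$, with $w=\sigma(v)v$; since by the Gauss lemma the kernel $K$ of $d(\exp_p)_w$ lies in $v^\perp$ and is $g$--orthogonal to $\dot\gamma_v(\sigma(v))$, this gives $\ker d\Psi_v=K\cap\ker(d\sigma_v|_{v^\perp})$, so that $J\Psi(v)=0$ as soon as this intersection is nontrivial.

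The delicate point --- where I expect essentially all the difficulty to be concentrated, and which is exactly where~\cite{ardgui} and~\cite{figrifvil} push further to obtain the stronger dimension bound --- is to prove that $K\cap\ker(d\sigma_v|_{v^\perp})\neq\{0\}$ for $\H^{n-1}$--a.e.\ $v\in\Sigma$ (this is in fact equivalent, via the area formula, to the desired conclusion $\H^{n-1}(\Conj)=0$). It is automatic when $\dim K\geq2$, and in general it hinges on the fine behaviour of the cut--time function $\sigma$ relative to the conjugate directions of the geodesic, to be extracted from the Jacobi field / index form analysis at a first conjugate point in the spirit of Itoh--Tanaka; alternatively one may simply quote $\H^{n-1}(\Conj)=0$ from the cited works. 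Once $\H^{n-1}(\Conj)=0$ is established, $\H^{n-1}(\cutp\setminus U)=0$ follows from the inclusion of the second paragraph, and the proof is complete.
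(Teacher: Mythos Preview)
Your construction of $U$ via the implicit--function argument at ``good'' points is correct and essentially matches the paper's use of Proposition~4.7 in~\cite{mant4}. Your treatment of $\cutp^{\geq3}\setminus\Conj$ by the codimension--two argument (three unit vectors are affinely independent) is also correct, and is in fact a cleaner direct argument than what the paper does for this piece; the paper instead invokes the SBV blow--up structure at these points too, observing that when $m\geq3$ smooth hypersurfaces meet at $q$ the blow--up cannot be a single hyperplane $\apT_q\cutp$.

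However, there is a genuine gap at the step you yourself flag: showing $\H^{n-1}(\Conj)=0$. Your area--formula approach via $\Psi(v)=\exp_p(\sigma(v)v)$ reduces to proving $K\cap\ker(d\sigma_v\vert_{v^\perp})\neq\{0\}$ for $\H^{n-1}$--a.e.\ $v\in\Sigma$, and in the generic case $\dim K=1$ this amounts to $d\sigma_v\vert_K=0$, which you do not prove. Quoting~\cite{ardgui} or~\cite{figrifvil} works but imports a result strictly stronger than the theorem you are proving.

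The paper avoids this entirely by a different mechanism. It works with the \emph{first--conjugate--time} function $c$ rather than the cut--time $\sigma$, and with $F(v)=\exp_p(c_vv)$. By results from~\cite{mant4}, on the open set $V\subset U_p$ where $\mathrm{rank}\,dF=n-1$ the function $c$ is smooth and $F(V)$ is a smooth hypersurface; the complement $F(U_p\setminus V)$ is $\H^{n-1}$--null by Sard. The key geometric observation is that at any $q=F(v)\in F(V)\cap\cutp$ the geodesic $\gamma_v$ is \emph{tangent} to $F(V)$: choosing $w\in T_vU_p$ with $d\exp_p[c_vv](w)=0$ (this exists precisely because $q$ is conjugate, and $w\perp v$ by Gauss), one computes $dF_v(w)=(dc[v](w))\,\dot\gamma_v(c_v)$, so $\dot\gamma_v(c_v)\in T_qF(V)$. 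Now the SBV structure of $\nabla d_p$ says that at $\H^{n-1}$--a.e.\ $q\in\cutp$ the blow--up is a ``roof'': exactly two minimal geodesics arrive, both \emph{transversal} to $\apT_q\cutp$. At a density--one point $q$ of $F(V)\cap\cutp$ with approximate tangent, $T_qF(V)=\apT_q\cutp$; hence the tangent geodesic $\gamma_v$ cannot be either of the two transversal ones, which forces $\H^{n-1}(F(V)\cap\cutp)=0$ and thus $\H^{n-1}(\Conj)=0$. This SBV--based contradiction replaces the missing kernel computation in your approach.
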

\begin{proof}
First we show that the set of optimal conjugate points $\Conj$ is a closed subset of ${\mathcal{H}}^{n-1}$--zero measure, then we will see that the points of $\Sing\setminus\Conj$ where there arrive more than two geodesics is also a closed subset of ${\mathcal{H}}^{n-1}$--zero measure. The third point then follows by the analysis in the proof of Proposition~4.7 in~\cite{mant4}.

Recalling that $U_p=\left\{v\in T_pM\,\vert \, g_p(v,v)=1 \right\}$ is the
set of unit tangent vectors to $M$ at $p$, we define the function
$c:U_p\to\R^+\cup\{+\infty\}$ such that the point $\gamma_v(c_v)$ is
the first conjugate point (if it exists) along the geodesic
$\gamma_v$, that is, the differential $d\exp_p$ is not invertible at
the point $c_vv\in T_pM$. By Lemma~4.11 and the proof of
Proposition~4.9 in~\cite{mant4}, in the open subset $V\subset U_p$
where the rank of the differential of the map $F:U_p\to M$, defined as
$F(v)=\exp_p(c_vv)$ is $n-1$, the map $c:U_p\to\R^+\cup\{+\infty\}$ is
smooth hence $F(V)$ is locally a smooth hypersurface. As, by Sard's
theorem, the image of $U_p\setminus V$ is a closed set of
${\mathcal{H}}^{n-1}$--zero measure, we only have to deal with the
images $F(v)$ of the unit vectors $v\in V$ with $c_v=\sigma_v$ (see
at the end of the introduction), that is, with $F(V)\cap\cutp$, which is a
closed set.\\
We then consider the set $D\subset (F(V)\cap\cutp)$ of the points $q$
where $\apT_q\cutp$ exists and the {\em density} of the rectifiable
set $F(V)\cap\cutp$ in the cutlocus of the point $p$, with respect to
the Hausdorff measure ${\mathcal{H}}^{n-1}$, is one
(see~\cite{fede,simon}). It is well known that $D$ and $F(V)\cap\cutp$
only differ by a set of ${\mathcal{H}}^{n-1}$--zero measure. If
$F(v)=q\in D$ then, $c_v=\sigma_v$ and, by the above density property,
the hypersurface $F(V)$ is ``tangent'' 
to $\cutp$ at the point $q$, that is, $T_qF(V)=\apT_q\cutp$.\\
We claim now that the minimal geodesic $\gamma_v$ is tangent to the
hypersurface $F(V)$, hence to the cutlocus, at the point $q$. Indeed,
as $d\exp_p$ is not invertible at $c_vv\in T_pM$, by Gauss lemma there
exists a vector $w\in T_vU_p$ such that $d\exp_p[c_vv](w)=0$, hence
$$
dF_v(w)=(dc[v](w))\dot{\gamma}_v(c_v)+d\exp_p[c_vv](c_vw)
=(dc[v](w))\dot{\gamma}_v(c_v)\,,
$$
thus, $\dot{\gamma}_v(c_v)$ belongs to the tangent space $dF(T_vU_p)$
to the hypersurface $F(V)$ at the point $q$, which coincides with
$\apT_q\cutp$, as we claimed.\\
By the properties of SBV functions described before, at ${\mathcal{H}}^{n-1}$--almost every point $q\in D$, the blow--up of the function $d_p$ is a ``roof'', that is, there arrive exactly two minimal geodesics both intersecting transversally the cutlocus at $q$ (the vectors $\nabla d^+_p$ and $\nabla d^-_p$ do not belong to $\apT_qM$), hence the above minimal geodesic $\gamma_v$ cannot coincide with any of these two.\\ 
We then conclude that ${\mathcal{H}}^{n-1}(D)=0$ and the same for the set $\Conj$.

Suppose now that $q\in\cutp\setminus\Conj\subset\Sing$, by the
analysis in the proof of Proposition~4.7 in~\cite{mant4} (and
Lemma~4.8), at the point $q$ there arrive a {\em finite} number $m\geq
2$ of distinct minimal geodesics and when $m>2$ the cutlocus of $p$ is
given by the union of at least $m$ smooth hypersurfaces with Lipschitz
boundary passing at the point $q$, in particular the above blow--up at
$q$ cannot be a single hyperplane $\apT_q\cutp$. By the above
discussion, such points with $m>2$ are then of
${\mathcal{H}}^{n-1}$--zero measure, moreover, by
Propositions~\ref{supcontinu} and~\ref{estremali} the set of points in
$\cutp\setminus\Conj$ with only two minimal geodesics is open and we
are done.
\end{proof}

\begin{rem} In the special two--dimensional and analytic case, it can
  be said something more, that is, the number of optimal conjugate
  points is locally finite and the cutlocus is a locally finite graph
  with smooth edges, see the classical papers by
  Myers~\cite{myers1,myers2}. We conjecture
  that, in general, the set of optimal conjugate points is 
  an $(n-2)$--dimensional rectifiable set. 
\end{rem}

By the third point if this theorem, in the open set $U$ the two side limits $\nabla d^+_p$ and $\nabla d^-_p$ of the gradient field $\nabla d_p$ are actually smooth and classical limits, moreover it is locally defined a smooth unit normal vector $\nu_q\in T_qM$ orthogonal to $T_q\cutp$, with the convention that $g_q(\nu_q,v)$ is positive for every vector $v\in T_qM$ belonging to the halfspace corresponding to the side associated to $\nabla d^+_p$. Hence, since ${\mathcal{H}}^{n-1}(\cutp\setminus U)=0$, we have a precise description of the singular ``jump'' part as follows,
$$
{\mathrm{J}}\nabla d_p=-\Bigl(\,(\nabla d^+_p-\nabla d^-_p)\otimes\nu\,\Bigr)\,{\mathcal{H}}^{n-1}\res \cutp
$$
and, noticing that the ``jump'' of the gradient of $d_p$ in $U$ must be orthogonal to the tangent space $T_q\cutp$, thus parallel to the unit normal vector $\nu_q\in T_qM$, hence we conclude
$$
{\mathrm{J}}\nabla d_p=-(\nu\otimes\nu)\,\vert\nabla d^+_p-\nabla d^-_p\vert_g\,{\mathcal{H}}^{n-1}\res \cutp\,.
$$
Notice that the singular part of the distributional Hessian of $d_p$ is a rank one symmetric $(0,2)$--tensor field.

\begin{rem}\label{extra} This description of the ``jump'' part of the singular measure is actually a direct consequence of the structure theorem of BV functions (see~\cite{ambfupa}), even without knowing, by Theorem~\ref{smoo}, that the cutlocus is ${\mathcal{H}}^{n-1}$--almost everywhere smooth.
\end{rem}

\begin{teo}
If $n\geq 2$, the distributional Hessian of the distance from a point $p\in M$ is given by the Radon measure 
$$
{\mathrm{Hess}}\,d_p={\widetilde{\mathrm{Hess}}}\,d_p\,{\mathrm{Vol}}-(\nu\otimes\nu)\,\vert\nabla d^+_p-\nabla d^-_p\vert_g\,{\mathcal{H}}^{n-1}\res \cutp\,,
$$
where ${\widetilde{\mathrm{Hess}}}\,d_p$ is the standard Hessian of $d_p$, where it exists (${\mathcal{H}}^{n-1}$--almost everywhere on $M$), and $\nabla d^+_p$, $\nabla d^-_p$, $\nu$ are defined above.
\end{teo}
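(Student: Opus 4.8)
The plan is to obtain the decomposition by transcribing directly the $SBV_{\mathrm{loc}}$ membership of the gradient field. The first step is to observe that, integrating by parts twice in the defining expression $\int_M d_p\,\nabla^2_{ij}((V\otimes V)^{ij})\,d\mathrm{Vol}$ (the boundary terms vanishing because $V$ has compact support, and extending from rank--one test tensors to arbitrary smooth symmetric ones by polarization), the tensor--valued distribution $\mathrm{Hess}\,d_p$ agrees on $M\setminus\{p\}$ with the distributional covariant derivative of the vector field $\nabla d_p$. By the proposition quoted from~\cite{mant4}, $\nabla d_p\in SBV_{\mathrm{loc}}(M\setminus\{p\})$, so that derivative is a Radon measure whose canonical splitting has vanishing Cantor part; hence $\mathrm{Hess}\,d_p=\widetilde D(\nabla d_p)+\mathrm J(\nabla d_p)$ on $M\setminus\{p\}$, and it remains to identify the two surviving pieces and to check that nothing is concentrated at $p$.

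For the absolutely continuous part, on the open set $M\setminus(\{p\}\cup\cutp)$ the function $d_p$ is smooth, so there $\widetilde D(\nabla d_p)$ is the classical Hessian times $\mathrm{Vol}$; since $\cutp$ is $\mathcal H^{n-1}$--rectifiable, hence $\mathcal L^n$--negligible, and $\widetilde D(\nabla d_p)$ is by definition absolutely continuous with respect to $\mathrm{Vol}$, it assigns $\cutp$ no mass. Thus $\widetilde D(\nabla d_p)=\widetilde{\mathrm{Hess}}\,d_p\,\mathrm{Vol}$, where $\widetilde{\mathrm{Hess}}\,d_p$ is the $\mathcal L^n$--a.e. defined density of the absolutely continuous part: it coincides with the classical Hessian on $M\setminus(\{p\}\cup\cutp)$ and, more generally, with the Alexandrov (second--order approximate) Hessian wherever the semiconcave function $d_p$ is twice differentiable.

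For the jump part I would simply invoke the computation already performed in the discussion preceding the statement: by Theorem~\ref{smoo} the cutlocus is, away from an $\mathcal H^{n-1}$--null set, a smooth hypersurface carrying a well--defined unit normal $\nu$ and two classical one--sided gradient limits $\nabla d^+_p,\nabla d^-_p$; at $\mathcal H^{n-1}$--almost every point of $\cutp$ the blow--up of $d_p$ is a genuine ``roof'' (so $\nabla d_p$ jumps there), whence the jump set of $\nabla d_p$ is $\mathcal H^{n-1}$--equivalent to $\cutp$ and the structure theorem for $BV$ functions gives $\mathrm J(\nabla d_p)=\bigl((\nabla d^+_p-\nabla d^-_p)\otimes\nu\bigr)\,\mathcal H^{n-1}\res\cutp$. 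Because the jump of a gradient across a hypersurface is normal to it, this rewrites as $-(\nu\otimes\nu)\,\abs{\nabla d^+_p-\nabla d^-_p}_g\,\mathcal H^{n-1}\res\cutp$, the negative sign being forced by the semiconcavity of $d_p$ (the superdifferential ``folds inward'' across $\cutp$); as recorded in Remark~\ref{extra} this step does not even require the almost--everywhere smoothness of $\cutp$.

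Finally one disposes of the point $p$, and this is where $n\ge2$ enters. A single point is $\mathcal H^{n-1}$--negligible and cannot carry a Cantor part, so the only possible contribution of $p$ would be an atom of $\mathrm{Hess}\,d_p$; but in geodesic normal coordinates $d_p$ behaves like the Euclidean norm $\abs{x}$, whose gradient $x/\abs{x}$ lies in $W^{1,1}_{\mathrm{loc}}(\R^n)$ precisely when $n\ge2$ (its classical Hessian is $O(\abs{x}^{-1})$, locally integrable in dimension at least two), so $\nabla d_p$ is in fact in $BV_{\mathrm{loc}}(M)$ across $p$ with no singular mass there. Hence the decomposition valid on $M\setminus\{p\}$ is the decomposition on all of $M$, and adding the two pieces yields the claimed identity. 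I expect the only genuinely delicate points to be this bookkeeping at $p$ and the precise matching of the measure--theoretic density of $\widetilde D(\nabla d_p)$ with the pointwise object $\widetilde{\mathrm{Hess}}\,d_p$; everything else is a direct reading of the $SBV_{\mathrm{loc}}$ statement and of the jump computation already carried out.
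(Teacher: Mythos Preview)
Your proposal is correct and follows essentially the same route as the paper: in the paper this theorem is not given a separate proof but is stated as the summary of the preceding discussion (the $SBV_{\mathrm{loc}}$ membership of $\nabla d_p$, the identification of the absolutely continuous part with $\widetilde{\mathrm{Hess}}\,d_p\,\mathrm{Vol}$, the computation of the jump part via Theorem~\ref{smoo} and the $BV$ structure theorem, and the remark that for $n\ge2$ the point $p$ carries no singular mass), and you have reproduced precisely that reasoning. Your treatment of the sign of the jump term via semiconcavity and your slightly more explicit handling of the point $p$ (through the local $W^{1,1}$ integrability of $\nabla\lvert x\rvert$) are minor elaborations on what the paper states more tersely, but the argument is the same.
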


\begin{cor} If $n\geq 2$, the distributional Laplacian of $d_p$ is the Radon measure
$$
\Delta d_p=\widetilde{\Delta}d_p\,{\mathrm{Vol}}-\vert\nabla d^+_p-\nabla d^-_p\vert_g\,{\mathcal{H}}^{n-1}\res \cutp\,,
$$
where ${\widetilde{\Delta}} d_p$ is the standard Laplacian of $d_p$, where it exists.
\end{cor}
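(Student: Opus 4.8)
The plan is to obtain the Corollary by contracting the Theorem with the metric; since the Theorem carries all the substance, I first outline how I would prove it.

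\emph{Proof of the Theorem.} First I would recognise the $(0,2)$--tensor valued distribution $\mathrm{Hess}\,d_p$ of Definition~\ref{weakdef} as the distributional covariant derivative of the gradient field $\nabla d_p$: for a smooth, compactly supported symmetric $(2,0)$--tensor $B$, integrating by parts once (legitimate because $d_p$ is Lipschitz) and then a second time one gets
$$
\int_M d_p\,\nabla^2_{ij}B^{ij}\,d\mathrm{Vol}=-\int_M (\nabla d_p)^i\,\nabla_j B^{ij}\,d\mathrm{Vol}=\big\langle\,D(\nabla d_p),B\,\big\rangle,
$$
the second integration by parts being justified by $\nabla d_p\in BV_{\mathrm{loc}}(M\setminus\{p\})$, while near $p$ the field $\nabla d_p$ is modelled on $x/|x|$ in $\mathbb R^n$, which for $n\ge 2$ belongs to $W^{1,1}_{\mathrm{loc}}$, so that the point $p$ produces no singular contribution (this is exactly where the hypothesis $n\ge 2$ enters). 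By Corollary~4.13 in~\cite{mant4} the field $\nabla d_p$ lies in $SBV_{\mathrm{loc}}(M\setminus\{p\})$, hence $\mathrm{Hess}\,d_p=D(\nabla d_p)$ is a Radon measure with no Cantor part, symmetric since $\nabla d_p$ is locally a gradient, and it decomposes as $\widetilde D(\nabla d_p)+\mathrm J(\nabla d_p)$.

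Next I would identify the two pieces. The absolutely continuous part $\widetilde D(\nabla d_p)$ equals the approximate differential of $\nabla d_p$ times $\mathrm{Vol}$; since $d_p$ is smooth on $M\setminus(\{p\}\cup\cutp)$ and $\{p\}\cup\cutp$ is $\mathrm{Vol}$--negligible, this is exactly ${\widetilde{\mathrm{Hess}}}\,d_p\,\mathrm{Vol}$. The jump part $\mathrm J(\nabla d_p)$ is, by the $BV$ structure theorem, concentrated on the approximate jump set of $\nabla d_p$, which is $\mathcal{H}^{n-1}$--contained in $\cutp$; by Theorem~\ref{smoo} this jump set coincides with $\cutp$ up to an $\mathcal{H}^{n-1}$--null set, along which $\cutp$ is a smooth hypersurface across which $d_p$ is a ``roof'', so that $\nabla d^+_p,\nabla d^-_p$ are genuine one--sided limits and the unit normal $\nu$ is defined. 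Then I would use the computation already carried out in the paragraphs preceding the Theorem: $\nabla d^+_p-\nabla d^-_p$ is orthogonal to $T_q\cutp$ (the tangential part of $\nabla d_p$ extends continuously across $\cutp$), hence parallel to $\nu$, and semiconcavity of $d_p$ (Proposition~\ref{carlosemicref}) fixes its sign, giving $\mathrm J(\nabla d_p)=-(\nu\otimes\nu)\,|\nabla d^+_p-\nabla d^-_p|_g\,\mathcal{H}^{n-1}\res\cutp$; alternatively one invokes Remark~\ref{extra}. Summing the two pieces yields the Theorem. I expect the orthogonality-and-sign of the gradient jump to be the only genuinely delicate point, the rest being assembly of the previously stated results.

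\emph{Proof of the Corollary.} Testing $\mathrm{Hess}\,d_p$ against the $(2,0)$--tensor $\varphi\,g^{-1}$, with $\varphi$ a scalar test function, and using $\nabla^2_{ij}(\varphi\,g^{ij})=\Delta\varphi$, one sees that the distributional Laplacian $\Delta d_p$ of Definition~\ref{weakdef} is the $g$--trace of the distributional Hessian. Since $\mathrm{tr}_g\,{\widetilde{\mathrm{Hess}}}\,d_p={\widetilde\Delta}\,d_p$ and $\mathrm{tr}_g(\nu\otimes\nu)=|\nu|^2_g=1$, the formula of the Theorem gives at once
$$
\Delta d_p={\widetilde\Delta}\,d_p\,\mathrm{Vol}-|\nabla d^+_p-\nabla d^-_p|_g\,\mathcal{H}^{n-1}\res\cutp,
$$
which is the assertion.
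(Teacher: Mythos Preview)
Your proposal is correct and follows essentially the same route as the paper: the Theorem itself is not given a separate proof there but is the summary of the preceding discussion (SBV structure of $\nabla d_p$, identification of the absolutely continuous part on $M\setminus(\{p\}\cup\cutp)$, and the computation of the jump part via Theorem~\ref{smoo} and the orthogonality of the gradient jump to $T_q\cutp$), and the Corollary is then obtained by taking the $g$--trace exactly as you do. Your remark that $x/|x|\in W^{1,1}_{\mathrm{loc}}$ for $n\ge 2$ makes explicit the paper's brief comment that the singular point $p$ is not ``seen'' by the singular part when $n\ge 2$.
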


\begin{cor} There hold
$$
\Delta d_p\leq\widetilde{\Delta}d_p\,{\mathrm{Vol}}
$$
and
$$
{\mathrm{Hess}}\,d_p\leq {\widetilde{\mathrm{Hess}}}\,d_p\,{\mathrm{Vol}}\,,
$$
as $(0,2)$--tensor fields.\\
As a consequence, the Hessian and Laplacian inequalities in Theorem~\ref{ggg} hold in the sense of distributions.\\
Moreover, we have
$$
\Delta d_p\geq\widetilde{\Delta}d_p\,{\mathrm{Vol}}-2{\mathcal{H}}^{n-1}\res \cutp\,.
$$
and	
$$
{\mathrm{Hess}}\,d_p\geq {\widetilde{\mathrm{Hess}}}\,d_p\,{\mathrm{Vol}}-2(\nu\otimes\nu)\,{\mathcal{H}}^{n-1}\res \cutp\geq {\widetilde{\mathrm{Hess}}}\,d_p\,{\mathrm{Vol}}-2g\,{\mathcal{H}}^{n-1}\res \cutp\,,
$$
as $(0,2)$--tensor fields.
\end{cor}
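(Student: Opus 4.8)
The plan is to read all four pairs of inequalities off the structure formula for ${\mathrm{Hess}}\,d_p$ (and its Laplacian corollary) proved just above, so that nothing remains but two elementary algebraic facts about symmetric $(0,2)$--tensors and the triangle inequality. For the two \emph{upper} bounds I would simply observe that the singular term $-(\nu\otimes\nu)\,\vert\nabla d^+_p-\nabla d^-_p\vert_g\,{\mathcal{H}}^{n-1}\res\cutp$ appearing in ${\mathrm{Hess}}\,d_p$ is a nonpositive tensor--valued measure: for every vector field $V$ we have $(\nu\otimes\nu)(V,V)=(g(\nu,V))^2\geq0$, the scalar factor $\vert\nabla d^+_p-\nabla d^-_p\vert_g$ is nonnegative, and ${\mathcal{H}}^{n-1}\res\cutp$ is a nonnegative measure. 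Hence $\int_M d_p\,\nabla^2_{ji}(V^iV^j)\,d{\mathrm{Vol}}\leq\int_M({\widetilde{\mathrm{Hess}}}\,d_p)_{ij}V^iV^j\,d{\mathrm{Vol}}$ for all compactly supported smooth $V$, which is ${\mathrm{Hess}}\,d_p\leq{\widetilde{\mathrm{Hess}}}\,d_p\,{\mathrm{Vol}}$; the Laplacian bound is the identical argument, pairing the scalar singular measure with a nonnegative test function.

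To then get the comparison inequalities of Theorem~\ref{ggg} in distributional sense, I would combine the upper bounds just obtained with the classical (pointwise) comparison theorem. On the open set $M\setminus(\{p\}\cup\cutp)$, which has full ${\mathrm{Vol}}$--measure, the function $d_p$ is smooth and Theorem~\ref{ggg} applies verbatim, giving ${\widetilde{\mathrm{Hess}}}\,d_p\leq{\mathrm{Hess}}^Kd^K_P(d_p)$ and $\widetilde{\Delta}d_p\leq\Delta^Kd^K_P(d_p)$ there; since the complement is ${\mathrm{Vol}}$--null, these are inequalities of $L^1_{\mathrm{loc}}$ tensor (resp.\ scalar) fields on all of $M$, so ${\widetilde{\mathrm{Hess}}}\,d_p\,{\mathrm{Vol}}\leq{\mathrm{Hess}}^Kd^K_P(d_p)\,{\mathrm{Vol}}$ as tensor--valued measures, and chaining with the previous step gives ${\mathrm{Hess}}\,d_p\leq{\mathrm{Hess}}^Kd^K_P(d_p)\,{\mathrm{Vol}}$; likewise for the Laplacian.

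For the \emph{lower} bounds I would reduce everything, again via the structure formula, to the single estimate $\vert\nabla d^+_p-\nabla d^-_p\vert_g\leq2$. This is immediate: $\nabla d_p$ is a ${\mathrm{Vol}}$--a.e.\ defined $g$--unit vector field, so its one--sided approximate limits $\nabla d^+_p,\nabla d^-_p$ are $g$--unit vectors at ${\mathcal{H}}^{n-1}$--a.e.\ point of $\cutp$ (equivalently, by Propositions~\ref{supcontinu} and~\ref{estremali}, they are the velocities at $q$ of the two minimal geodesics from $p$ meeting there), whence $\vert\nabla d^+_p-\nabla d^-_p\vert_g\leq\vert\nabla d^+_p\vert_g+\vert\nabla d^-_p\vert_g=2$. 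Substituting into the Laplacian formula gives $\Delta d_p\geq\widetilde{\Delta}d_p\,{\mathrm{Vol}}-2{\mathcal{H}}^{n-1}\res\cutp$; for the Hessian, the nonnegativity of $(\nu\otimes\nu)(2-\vert\nabla d^+_p-\nabla d^-_p\vert_g)\,{\mathcal{H}}^{n-1}\res\cutp$ gives ${\mathrm{Hess}}\,d_p\geq{\widetilde{\mathrm{Hess}}}\,d_p\,{\mathrm{Vol}}-2(\nu\otimes\nu)\,{\mathcal{H}}^{n-1}\res\cutp$, and the last weakening follows from $g-\nu\otimes\nu\geq0$, i.e.\ Cauchy--Schwarz with $\vert\nu\vert_g=1$: $(g-\nu\otimes\nu)(V,V)=\vert V\vert_g^2-(g(\nu,V))^2\geq0$.

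I do not expect a real obstacle, since the analytic substance is entirely contained in the structure theorem above; the one step I would treat with some care is the precise meaning of the distributional comparison with the model space, because ${\mathrm{Hess}}^Kd^K_P(d_p)$ is not literally a smooth tensor field, whereas Definition~\ref{weakdef} is stated for smooth $A$. I would therefore read that comparison as an inequality of tensor--valued Radon measures (or, equivalently, restrict it to any open subset of $M\setminus\{p\}$ on which the model Hessian is continuous), the only thing to check being ${\mathrm{Hess}}^Kd^K_P(d_p)\in L^1_{\mathrm{loc}}(M)$; this holds because $\cutp$ is ${\mathrm{Vol}}$--null and, for $n\geq2$, the model Hessian has near $p$ only the same integrable $O(1/d_p)$ singularity that ${\widetilde{\mathrm{Hess}}}\,d_p$ itself has there.
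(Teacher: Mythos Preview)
Your proposal is correct and is exactly the argument the paper has in mind: the corollary is stated without proof because it is read off directly from the structure theorem for ${\mathrm{Hess}}\,d_p$ (and its trace), using only that $(\nu\otimes\nu)(V,V)\geq 0$, that $\vert\nabla d^+_p-\nabla d^-_p\vert_g\leq 2$ since both one--sided limits are unit vectors, and that $\nu\otimes\nu\leq g$ for a unit $\nu$. Your remark about the regularity of the model--space Hessian in the comparison statement is a fair caveat but does not affect the argument.
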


\begin{rem} By their definition, it is easy to see that the same
  inequalities hold also for the Busemann functions, see for
  instance~\cite[Subsection~9.3.4]{petersen1} (in Section~9.3 of the same book 
  it is shown that the above Laplacian comparison holds on the
  whole $M$ in barrier sense while the analogous result for the Hessian
  can be found in Section~11.2). 
We underline here that Propositions~\ref{carlosemicref},~\ref{supcontinu} and~\ref{estremali} about the semiconcavity and the structure of the superdifferential of the distance function $d_p$ can also be used to show that the above inequalities hold in barrier/viscosity sense.
\end{rem}

\begin{rem}
Several of the conclusions of this paper holds also for the distance function from a closed {\em subset} of $M$ with boundary of class $C^3$ at least, see~\cite{mant4} for details.
\end{rem}

\appendix

\section{Weak Definitions of Sub/Supersolutions of PDEs}
\label{appA}

Let $(M,g)$ be a smooth, complete, Riemannian manifold and let $A$ be a smooth $(0,2)$ tensor field.

If $f:M\to\R$ satisfies ${\mathrm{Hess}}\,f\leq A$ at the point $p\in
M$ in barrier sense, for every $\varepsilon>0$ there exists a
neighborhood $U_\varepsilon$ of the point $p$ and a $C^2$--function
$h_\varepsilon:U_\varepsilon\to\R$ such that $h_\varepsilon(p)=f(p)$,
$h_\varepsilon\geq f$ in $U_\varepsilon$ and ${\mathrm{Hess}}\,
h_\varepsilon(p)\leq A(p)+\varepsilon g(p)$, hence, 
every $C^2$--function $h$ from a neighborhood $U$ of the point $p$
such that $h(p)=f(p)$ and $h\leq f$ in $U$ satisfies
$h(p)=h_\varepsilon(p)$ and $h\leq h_\varepsilon$ in $U\cap
U_\varepsilon$. It is then easy to see that it must be
${\mathrm{Hess}}\, h(p)\leq {\mathrm{Hess}}\, h_\varepsilon(p)\leq
A(p)+\varepsilon g(p)$, for every $\varepsilon>0$, hence 
${\mathrm{Hess}}\, h(p)\leq A(p)$. This shows that ${\mathrm{Hess}}\,
f\leq A$ at the point $p\in M$ also in {\em viscosity sense}.\\
The converse is not true, indeed, it is straightforward to check that
the function $f:\R\to\R$ given by $f(x)=x^2\sin{(1/x)}$ when $x\not=0$
and $f(0)=0$ satisfies $f^{\prime\prime}(0)\leq 0$ in viscosity sense but
not in barrier sense.\\
The same argument clearly also applies to the two definitions of $\Delta
f\leq\alpha$, for a smooth function $\alpha:M\to\R$.

We see now that instead the definitions of viscosity and
distributional sense coincide.

\begin{prop}\label{prop-equivalence} If $f:M\to\R$ satisfies ${\mathrm{Hess}}\, f\leq A$ in
  viscosity sense, it also satisfies  ${\mathrm{Hess}}\, f\leq A$ in 
distributional sense and viceversa. The same holds for $\Delta f\leq\alpha$.
\end{prop}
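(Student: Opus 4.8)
Since the three notions are purely local we may work in a coordinate chart, and since the equivalence ``$\Delta f\le\alpha$ in viscosity sense $\Leftrightarrow$ in distributional sense'' is exactly the content of~\cite{ishii2}, it suffices to treat the Hessian inequality. The pivotal remark is that, $A$ being smooth, in a small enough chart one has $A\le\Lambda g$ for a constant $\Lambda$, so in \emph{either} formulation the hypothesis implies ${\mathrm{Hess}}\,f\le\Lambda g$, and this forces $f$ to be locally semiconcave. Once this is known, $f$ enjoys the properties of locally semiconcave functions recalled in Section~2: its gradient lies in $BV_{\mathrm{loc}}$ and (cf.\ the discussion after Proposition~\ref{carlosemicref}) the distributional ${\mathrm{Hess}}\,f$ is a symmetric $(0,2)$--tensor valued Radon measure of the form $\widetilde{\mathrm{Hess}}\,f\,{\mathrm{Vol}}+\mu$ with $\mu\le 0$ a singular measure. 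Therefore the distributional inequality ``${\mathrm{Hess}}\,f\le A$'' is \emph{equivalent} to the pointwise inequality $\widetilde{\mathrm{Hess}}\,f\le A$ holding $\mathcal{L}^n$--almost everywhere.

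\emph{Viscosity $\Rightarrow$ distributional.} The viscosity bound ${\mathrm{Hess}}\,f\le\Lambda g$ makes $f$ semiconcave (in the chart $f-\tfrac{\Lambda}{2}|x|^2$ is viscosity--concave, hence concave; alternatively one works with the inf--convolutions of $f$, which are automatically semiconcave and inherit the viscosity inequality with an infinitesimally perturbed right--hand side). By Alexandrov's theorem, at $\mathcal{L}^n$--almost every $x_0$ the function $f$ has a second order Taylor expansion; writing it in normal coordinates at $x_0$ and subtracting $\tfrac{\varepsilon}{2}|x-x_0|^2$ we obtain, for each $\varepsilon>0$, a polynomial $h_\varepsilon$ with $h_\varepsilon(x_0)=f(x_0)$, $h_\varepsilon\le f$ near $x_0$ and ${\mathrm{Hess}}\,h_\varepsilon(x_0)=\widetilde{\mathrm{Hess}}\,f(x_0)-\varepsilon\,g(x_0)$. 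Applying the viscosity hypothesis to $h_\varepsilon$ gives $\widetilde{\mathrm{Hess}}\,f(x_0)-\varepsilon\,g(x_0)\le A(x_0)$, and $\varepsilon\to0$ yields $\widetilde{\mathrm{Hess}}\,f(x_0)\le A(x_0)$ almost everywhere, i.e.\ the distributional inequality.

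\emph{Distributional $\Rightarrow$ viscosity.} Again one first deduces that $f$ is semiconcave: freezing the coefficients of the metric at a point and mollifying, the distributional inequality ${\mathrm{Hess}}\,f\le\Lambda g$ passes to the regularizations up to an error vanishing with the parameter, so the regularizations are concave after subtracting a quadratic; hence, as above, $\widetilde{\mathrm{Hess}}\,f\le A$ almost everywhere. Let now $h$ be a $C^2$ function near $p$ with $h(p)=f(p)$ and $h\le f$. Replacing $h$ by $h_\eta=h-\eta\,d_p^2$ (note that $d_p^2$ is smooth near $p$, vanishes to second order at $p$ and ${\mathrm{Hess}}\,d_p^2(p)=2g(p)$) we may assume that $p$ is a \emph{strict} local maximum of the semiconvex function $h_\eta-f$. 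By Jensen's lemma, for all small $r,\delta>0$ the set of points $x\in B_r(p)$ at which, for some $q$ with $|q|\le\delta$, the function $y\mapsto(h_\eta-f)(y)-\langle q,y\rangle$ has a local maximum has positive Lebesgue measure; at almost every such $x$ the function $f$ is twice differentiable, $|D h_\eta(x)-D f(x)|\le\delta$ and $D^2 h_\eta(x)\le D^2 f(x)$ in the chart. Passing from the flat to the Riemannian Hessian --- the Christoffel correction is controlled by $\delta$ --- and using $\widetilde{\mathrm{Hess}}\,f(x)\le A(x)$ we get ${\mathrm{Hess}}\,h_\eta(x)\le A(x)+C\delta\,g(x)$ at points $x$ arbitrarily close to $p$. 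Letting $r,\delta\to0$, then $\eta\to0$, and invoking the continuity of ${\mathrm{Hess}}\,h$ and of $A$, we conclude ${\mathrm{Hess}}\,h(p)\le A(p)$, which is the viscosity inequality.

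The main obstacle is this last passage: deducing the pointwise inequality at the \emph{single} contact point $p$ from the almost everywhere information on $\widetilde{\mathrm{Hess}}\,f$ cannot be done by a naive limit and genuinely requires Jensen's lemma (equivalently, the Alexandrov--Bakelman--Pucci selection of ``good'' second order points near a maximum), together with the bookkeeping of the connection coefficients when translating between the flat and the Riemannian Hessian; the deduction of semiconcavity from the \emph{distributional} bound also demands some care with the variable coefficients of $\nabla^2$. An alternative that sidesteps both steps is to note that for each fixed smooth vector field $W$ the scalar operator $f\mapsto{\mathrm{Hess}}\,f(W,W)=\nabla^2_{ij}f\,W^iW^j$ is degenerate elliptic, that ``${\mathrm{Hess}}\,f\le A$'' in either weak sense is equivalent to ``${\mathrm{Hess}}\,f(W,W)\le A(W,W)$'' in the corresponding weak sense for all such $W$ (the distributional reformulation being obtained by testing against $V=\varphi W$ and patching with a partition of unity), and to invoke~\cite{ishii2}, which covers also degenerate elliptic operators, for each of these scalar inequalities.
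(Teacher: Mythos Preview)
Your main argument is correct but proceeds quite differently from the paper. You first establish semiconcavity of $f$, then exploit Alexandrov's theorem for the forward implication and Jensen's lemma for the converse, in effect reproving the relevant pieces of Ishii's result by hand. The paper instead takes precisely the route you sketch at the very end as an ``alternative'': it reduces the tensorial inequality $\Hess f\le A$ to the family of scalar, degenerate--elliptic inequalities $W^iW^j\Hess_{ij}f\le A_{ij}W^iW^j$ indexed by smooth compactly supported vector fields $W$ (this reduction is isolated as Lemma~\ref{lemmakey}) and then invokes Ishii's Theorems~\ref{Iteo1} and~\ref{Iteo2} as black boxes for each such scalar inequality. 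Your direct approach is more self--contained and makes the underlying mechanism (Alexandrov points, Jensen's selection) transparent; the paper's is shorter but has to face a subtlety you pass over in your closing sentence: Ishii's Theorem~\ref{Iteo2} (distributional $\Rightarrow$ viscosity) requires a $C^1$ square root of the coefficient matrix, which for the rank--one matrix $W\otimes W$ forces $\Vert W\Vert\in C^1_c(\Omega)$, and this is why Lemma~\ref{lemmakey} is asymmetric and why an approximation $\varphi_n=\varphi+\psi/n$ is needed when recovering the scalar distributional inequalities from the tensorial one. One small caution on your main line: the assertion that ``in the chart $f-\tfrac{\Lambda}{2}|x|^2$ is viscosity--concave, hence concave'' is not immediate on a manifold, since the Riemannian Hessian in coordinates carries the first--order Christoffel correction $-\Gamma^k_{ij}\partial_k f$ and you have no a~priori control on $\partial f$; your inf--convolution alternative (or a sup--convolution/doubling argument) is the clean way to get semiconcavity here.
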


In order to show the proposition, we recall the definitions of {\em
  viscosity} (sub/super) solution to a second order PDE.
Take a continuous map
$F:\Omega\times\R\times\R^n\times S^{n}\to\R$, where $\Omega$ is an open subset of $\R^n$ and $S^n$
denotes the space of real $n\times n$ symmetric matrices; also suppose that $F$ satisfies 
the {\em
  monotonicity condition}
$$
X\geq Y\qquad\Longrightarrow\qquad F(x,r,p,X)\leq F(x,r,p,Y)\,,
$$
for every $(x,r,p)\in\Omega\times\R\times\R^n$, where $X\geq Y$ means
that the difference matrix $X-Y$ is nonnegative definite. We consider then the second order PDE given by $F(x,f,\nabla f,\nabla^2f)=0$.

A continuous function $f:\Omega\to\R$ is said a {\em viscosity
  subsolution} of the above PDE if for every point $x\in\Omega$ and $\varphi\in C^2(\Omega)$
such that $f(x)-\varphi(x)= \sup_\Omega (f-\varphi)$, 
there holds $F(x,\varphi,\nabla\varphi,\nabla^2\varphi)\leq 0$

(see~\cite{crisli1,ishii2}). 
Analogously, $f\in C^0(\Omega)$ is a {\em viscosity
  supersolution} if for every point $x\in\Omega$ and $\varphi\in C^2(\Omega)$
such that $f(x)-\varphi(x)= \inf_\Omega (f-\varphi)$, 
there holds $F(x,\varphi,\nabla\varphi,\nabla^2\varphi)\geq 0$. If
$f\in C^0(\Omega)$ is both a viscosity subsolution and supersolution, 
it is then a {\em viscosity solution} of $F(x,f,\nabla
f,\nabla^2f)=0$ in $\Omega$.

It is easy to see that the functions $f\in C^0(\Omega)$ such that $\Delta
f\leq\alpha$ in {\em viscosity sense} at any point of $\Omega$, as in Definition~\ref{weakdef}, 
coincide with the viscosity supersolutions of the equation $-\Delta f+\alpha=0$ at the same point
(here the function $F$ is given by $F(x,r,p,X)=-\trace{X}+\alpha(x)$).

In the case of a Riemannian manifold $(M,g)$, one works in local charts and the operators
we are interested in become
\begin{align*}
\Hess^M_{ij} f(x) & = \frac{\partial^2 f(x)}{\partial x^i \partial x^j} - \Gamma_{ij}^k(x) \frac{\partial f}{\partial x^k} \\
\Delta^M f(x) & = g^{ij}(x) \Hess^M_{ij} f(x)\,,
\end{align*}
where $\Gamma_{ij}^k$ are the Christoffel symbols.\\
Analogously to the case of $\R^n$, taking 
$F(x,r,p,X)=-g^{ij}(x)X_{ij}+g^{ij}(x)\Gamma_{ij}^k(x)p_k+\alpha(x)$
(which is a smooth function independent of the variable $r$), we see 
that, according to Definition~\ref{weakdef}, $f$ satisfies $\Delta^M f\leq\alpha$ in {\em viscosity sense} at any point of $M$ 
if and only if it is a viscosity supersolution of the equation $F(x,f,\nabla f,\nabla^2f)=0$ at the same point.

Getting back to $\R^n$, given a linear, degenerate elliptic operator $L$ with smooth coefficients,
that is, defined by
$$
Lf(x)=-a^{ij}(x)\nabla^2_{ij}f(x)+b^k(x)\nabla_kf(x)+c(x)f(x)\,,
$$
and a smooth function $\alpha:\Omega\to\R$, 
we say that $f\in C^0(\Omega)$ is a distributional supersolution of the equation $Lf+\alpha=0$ when
$$
\int_\Omega \Bigl( fL^*\varphi+\alpha\varphi\, \Bigr)dx\geq0
$$
for every nonnegative, smooth function $\varphi\in C_c^\infty(\Omega)$.
Here $L^*$ is the formal adjoint operator of $L$:
\[
L^* \varphi(x) = -\nabla^2_{ji}(a^{ij}\varphi)(x) - \nabla_k(b^k\varphi)(x) + c(x)\varphi(x) .
\]

Under the hypothesis that the matrix of coefficients $(a_{ij})$ (which
is nonnegative definite) has a ``square root'' matrix belonging to $C^1(\Omega,S^n)$, Ishii showed in paper~\cite{ishii2} the equivalence of
the class of continuous viscosity subsolutions and the class
of continuous distributional subsolutions of the equation $Lf+\alpha=0$. 
More precisely, he proved the following two theorems (see also~\cite{lions2}).
\begin{teo}[Theorem~1 in~\cite{ishii2}]\label{Iteo1}
If $f\in C^0(\Omega)$ is a viscosity subsolution of the equation $Lf+\alpha=0$, then 
then it is a distribution subsolution of the same equation.
\end{teo}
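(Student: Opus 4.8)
The plan is to run the classical sup-convolution argument of Jensen and Ishii, reducing the general continuous viscosity subsolution to a semiconvex one and then invoking Alexandrov's second differentiability theorem. The point is that an arbitrary $f\in C^0(\Omega)$ carries no a priori information about $\nabla^2 f$ as a distribution, so I would first regularize: for $\Omega'\Subset\Omega$ and $\eps>0$ small set
\[
f^\eps(x)=\sup_{y\in\Omega}\Bigl(f(y)-\tfrac1{2\eps}\abs{x-y}^2\Bigr),\qquad x\in\Omega'.
\]
Standard properties of the sup-convolution give that $f^\eps$ is semiconvex (adding the paraboloid $\tfrac1{2\eps}\abs{x}^2$ makes it convex), that $f^\eps\downarrow f$ locally uniformly on $\Omega$ as $\eps\to0$, and --- the structurally important fact --- that $f^\eps$ is still a viscosity subsolution on $\Omega'$ of an equation $Lf^\eps+\alpha^\eps=0$ with $\alpha^\eps\to\alpha$ locally uniformly; the error $\alpha^\eps-\alpha$ arises because the supremum is attained at points at distance $O(\sqrt\eps)$ from $x$, hence is controlled by the moduli of continuity of the coefficients $a^{ij},b^k,c$ and of $\alpha$, all of which are smooth, so it is harmless.

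Next I would show that a semiconvex viscosity subsolution satisfies the differential inequality \emph{pointwise almost everywhere}. By Alexandrov's theorem $f^\eps$ is twice differentiable at $\mathcal{L}^n$-almost every point, with a classical gradient $\nabla f^\eps(x)$ and a classical Hessian, which I denote $(\nabla^2 f^\eps)^{\mathrm{ac}}(x)$; the claim is that at a.e.\ such $x$
\[
-a^{ij}(x)\,(\nabla^2 f^\eps)^{\mathrm{ac}}_{ij}(x)+b^k(x)\,\nabla_k f^\eps(x)+c(x)\,f^\eps(x)+\alpha^\eps(x)\le 0 .
\]
The difficulty is that the second-order Taylor polynomial of $f^\eps$ at $x$ need not lie above $f^\eps$ near $x$, so it is not a legitimate test function, and this is exactly what Jensen's lemma repairs: after perturbing $f^\eps$ by a small linear-plus-quadratic function so that the point becomes a strict local maximum, Jensen's lemma produces a set of positive measure of nearby points at which slightly tilted copies of $f^\eps$ attain local maxima; at those points the viscosity inequality applies, with Hessians controlled from semiconvexity, and one lets them converge to $x$ using the approximate continuity almost everywhere of the a.e.-defined Hessian $(\nabla^2 f^\eps)^{\mathrm{ac}}\in L^\infty\loc$ together with the continuity of the coefficients. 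This passage --- from ``viscosity'' to ``pointwise a.e.'' for semiconvex functions --- is the technical heart of the argument and the step I expect to be the main obstacle.

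With the pointwise inequality in hand I would integrate by parts. Since $f^\eps$ is semiconvex, its distributional Hessian $\nabla^2 f^\eps$ is a symmetric-matrix-valued Radon measure whose absolutely continuous part equals $(\nabla^2 f^\eps)^{\mathrm{ac}}\,\mathcal{L}^n$ and whose singular part $(\nabla^2 f^\eps)^{\mathrm{sing}}$ is a \emph{nonnegative} symmetric-matrix-valued measure, because $\nabla^2 f^\eps+\tfrac1\eps I\ge 0$ and the extra term is absolutely continuous. Hence, for any nonnegative $\varphi\in C^\infty_c(\Omega')$,
\[
\int_{\Omega'}f^\eps\,L^*\varphi\,dx=\int_{\Omega'}\varphi\,\bigl(-a^{ij}(\nabla^2 f^\eps)^{\mathrm{ac}}_{ij}+b^k\nabla_k f^\eps+c f^\eps\bigr)\,dx-\int_{\Omega'}\varphi\,a^{ij}\,d(\nabla^2 f^\eps)^{\mathrm{sing}}_{ij}.
\]
The first integral on the right is $\le-\int_{\Omega'}\alpha^\eps\varphi\,dx$ by the pointwise inequality of the previous step, and the second is $\le0$ because contracting the nonnegative-definite matrix $(a^{ij})$ with the nonnegative-matrix-valued measure $(\nabla^2 f^\eps)^{\mathrm{sing}}$ yields a nonnegative measure. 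Therefore $\int_{\Omega'}\bigl(f^\eps L^*\varphi+\alpha^\eps\varphi\bigr)\,dx\le 0$.

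Finally I would let $\eps\to0$: since $f^\eps\to f$ locally uniformly and $L^*\varphi\in L^1(\Omega')$, while $\alpha^\eps\to\alpha$ uniformly on $\supp\varphi$, we get $\int_\Omega\bigl(fL^*\varphi+\alpha\varphi\bigr)\,dx\le0$ for every nonnegative $\varphi\in C^\infty_c(\Omega)$, i.e.\ $f$ is a distributional subsolution of $Lf+\alpha=0$. The same three-step scheme --- sup-convolution, pointwise a.e.\ inequality via Alexandrov and Jensen's lemma, integration by parts exploiting the sign of the singular Hessian --- applies, mutatis mutandis, to the Hessian system $\Hess f\le A$ (contracting against $V\otimes V$ rather than against a scalar $\varphi$) and, working in local charts, to the Riemannian operators $\Hess^M$ and $\Delta^M$.
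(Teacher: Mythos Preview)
The paper does not give a proof of this theorem: it is simply quoted as Theorem~1 of Ishii's paper and used as a black box in the subsequent equivalence argument, so there is no ``paper's own proof'' to compare against.

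That said, your outline is correct and is, in essence, exactly Ishii's argument: regularize by sup--convolution to obtain a semiconvex approximation that remains a viscosity subsolution up to a vanishing perturbation of the data; invoke Alexandrov's theorem together with Jensen's lemma to upgrade the viscosity inequality to a pointwise a.e.\ inequality for the semiconvex $f^\eps$; integrate by parts, using that the singular part of the distributional Hessian of a semiconvex function is a nonnegative matrix--valued measure, so that its contraction against the nonnegative coefficient matrix $(a^{ij})$ and the nonnegative test function $\varphi$ has the favorable sign; and finally let $\eps\to0$. The one place where your write--up is slightly imprecise is in packaging the entire perturbation as an error $\alpha^\eps-\alpha$: the sup--convolution shifts the \emph{point} at which the viscosity inequality is verified by $O(\sqrt\eps)$, so the coefficients $a^{ij},b^k,c$ are evaluated at the shifted point as well, and strictly speaking $f^\eps$ is a viscosity subsolution of a perturbed operator $L^\eps f^\eps+\alpha^\eps=0$ rather than of $Lf^\eps+\alpha^\eps=0$. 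Since all coefficients here are smooth and $f^\eps,\nabla f^\eps$ are locally bounded uniformly in $\eps$, this extra error is also $o(1)$ and disappears in the limit, so your conclusion stands.
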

\begin{teo}[Theorem~2 in~\cite{ishii2}]\label{Iteo2}
Assume that the ``square root'' of the matrix of coefficients $(a_{ij})$ belongs to $C^1(\Omega)$. 
If $f\in C^0(\Omega)$ is a distributional subsolution of the equation $Lf+\alpha=0$, then 
then it is a viscosity subsolution of the same equation.
\end{teo}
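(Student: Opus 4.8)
The plan is to argue by contradiction along the classical route for upgrading a distributional inequality to a viscosity one: mollify the candidate subsolution, exploit the distributional inequality at a maximum point of the mollification lying near the contact point, and control the error coming from the fact that $L$ does not commute with convolution. It is precisely this last error term --- a second order commutator --- that forces the hypothesis on the $C^1$ square root of $(a^{ij})$.

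Assume, for contradiction, that $f\in C^0(\Omega)$ is a distributional subsolution of $Lf+\alpha=0$, that is, $Lf+\alpha\leq0$ in the sense of distributions, but that $f$ is not a viscosity subsolution. Since $-(Lf+\alpha)$ is then a nonnegative distribution on $\Omega$, it is a nonnegative Radon measure $\mu$, so $Lf=-\alpha-\mu$. From the failure of the viscosity condition there are a point $x_0\in\Omega$ and $\varphi\in C^2$ in a neighbourhood of $x_0$ with $\varphi(x_0)=f(x_0)$, $\varphi\geq f$ nearby and $L\varphi(x_0)+\alpha(x_0)>0$; replacing $\varphi$ by $\varphi+\delta\abs{x-x_0}^2$ with $\delta>0$ small (which changes $L\varphi(x_0)$ by $O(\delta)$ and turns the contact point into a strict local maximum of $f-\varphi$) and then shrinking the neighbourhood, we may assume that $f-\varphi$ has a strict maximum over a closed ball $\overline{B_r(x_0)}\Subset\Omega$, attained only at $x_0$, while still $L\varphi(x_0)+\alpha(x_0)>0$.

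I would then mollify: with a standard symmetric mollifier $\rho_\eps$, set $f_\eps=f*\rho_\eps$, which is smooth on $B_r(x_0)$ for small $\eps$ and converges uniformly to $f$ on $\overline{B_r(x_0)}$. Because $f-\varphi$ has a strict interior maximum at $x_0$, for small $\eps$ the function $f_\eps-\varphi$ attains its maximum over $\overline{B_r(x_0)}$ at an interior point $x_\eps$, and $x_\eps\to x_0$, $(f_\eps-\varphi)(x_\eps)\to0$. At $x_\eps$ one has $\nabla f_\eps(x_\eps)=\nabla\varphi(x_\eps)$ and $\nabla^2 f_\eps(x_\eps)\leq\nabla^2\varphi(x_\eps)$, so, using $(a^{ij})\geq0$,
\[
Lf_\eps(x_\eps)\ \geq\ L\varphi(x_\eps)+c(x_\eps)\bigl(f_\eps(x_\eps)-\varphi(x_\eps)\bigr)\,,
\]
whose right-hand side converges to $L\varphi(x_0)$, so $\liminf_{\eps\to0}Lf_\eps(x_\eps)\geq L\varphi(x_0)$. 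On the other hand, writing $Lf_\eps=(Lf)*\rho_\eps+R_\eps$ with $R_\eps:=L(f*\rho_\eps)-(Lf)*\rho_\eps$ the commutator, and using $Lf=-\alpha-\mu$ together with $\mu*\rho_\eps\geq0$, one obtains the pointwise bound $Lf_\eps\leq-\alpha*\rho_\eps+R_\eps$ on $B_r(x_0)$, hence $Lf_\eps(x_\eps)\leq-(\alpha*\rho_\eps)(x_\eps)+R_\eps(x_\eps)$. Combining the two and letting $\eps\to0$ gives
\[
L\varphi(x_0)+\alpha(x_0)\ \leq\ \limsup_{\eps\to0}R_\eps(x_\eps)\,,
\]
so the whole matter is reduced to proving that $R_\eps\to0$ uniformly on a neighbourhood of $x_0$: this contradicts $L\varphi(x_0)+\alpha(x_0)>0$ and finishes the proof.

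The hard part is exactly this commutator estimate, and it is the only step where the structural hypothesis enters. Splitting $L=L_2+L_1+L_0$ into second, first and zeroth order parts, the zeroth order commutator equals $\int f(y)(c(x)-c(y))\rho_\eps(x-y)\,dy=O(\eps)$ uniformly, and the first order commutator tends to $0$ uniformly by the classical Friedrichs/DiPerna--Lions computation (immediate here, since $b$ is smooth and $f$ is continuous). For $L_2 f=-a^{ij}\nabla^2_{ij}f$, an integration by parts writes the commutator as a first-order-type term (handled as above) plus the critical term $-\int f(y)\,\bigl(a^{ij}(x)-a^{ij}(y)\bigr)\,\nabla^2_{x_ix_j}\rho_\eps(x-y)\,dy$; since $\nabla^2\rho_\eps$ has size $\eps^{-n-2}$ with vanishing integral, a bound using only $a\in C^1$ yields a useless $O(\eps^{-1})$. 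The point is that writing $a^{ij}=\sum_\beta\sigma^{i\beta}\sigma^{j\beta}$ with $\sigma\in C^1(\Omega,S^n)$ allows one to recast $L_2$, modulo first order operators with continuous coefficients, as $-\sum_\beta(\sigma^{i\beta}\nabla_i)(\sigma^{j\beta}\nabla_j)$ --- a composition of two first order operators whose coefficients are $C^1$ --- and to run a DiPerna--Lions-type commutator argument exploiting this factorization, the $C^1$ regularity of $\sigma$ supplying exactly the extra factor of $\eps$ that makes the critical term $o(1)$ uniformly. This is the main obstacle; once it is granted, the contradiction above completes the proof. For the detailed computation I would refer to~\cite{ishii2} (see also~\cite{lions2}).
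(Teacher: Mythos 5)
The paper does not prove this statement: it is quoted verbatim from Ishii's paper~\cite{ishii2} (where it is Theorem~2) and used as a black box, so there is no in--paper proof for your argument to be checked against. Your proposal correctly identifies the standard strategy (argue by contradiction, mollify, exploit the distributional inequality at an interior maximum point $x_\eps$ of $f_\eps-\varphi$, and reduce everything to the vanishing of the commutator $R_\eps=L(f*\rho_\eps)-(Lf)*\rho_\eps$), and it correctly locates where the hypothesis $a^{ij}=\sigma^{i\beta}\sigma^{j\beta}$ with $\sigma\in C^1$ must enter. But you explicitly defer the one nontrivial step --- the second-order commutator estimate --- to~\cite{ishii2}. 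Since that estimate \emph{is} the content of the theorem (everything else in your outline is standard degenerate-elliptic viscosity manipulation), the proposal remains an outline that, like the paper, ultimately cites Ishii for the hard part rather than proving it.

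One caution about the heuristic you offer: writing $L_2=-\sum_\beta(\sigma^{i\beta}\nabla_i)(\sigma^{j\beta}\nabla_j)$ modulo a first-order operator does \emph{not} reduce the second-order commutator to two independent first-order DiPerna--Lions estimates, since the two first-order factors do not separately commute with $*\rho_\eps$ in the required uniform topology, and composing the resulting $o(1)$ errors with another derivative reintroduces an $\eps^{-1}$. After the scaling $y=x-\eps z$ the critical term
\[
\int f(y)\bigl(a^{ij}(x)-a^{ij}(y)\bigr)\,\partial^2_{ij}\rho_\eps(x-y)\,dy
\]
converges, under $\sigma\in C^1$, to a finite but in general \emph{nonzero} expression involving $f(x)$, $\nabla\sigma(x)$, and the second moments $\int z^k z^l\,\partial^2_{ij}\rho(z)\,dz$; the proof must exhibit the cancellation of this contribution against the other pieces of $R_\eps$, which is the second-order analogue of the cancellation of the $f\operatorname{div}b$ term in the first-order Friedrichs--DiPerna--Lions lemma. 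That cancellation is precisely the delicate computation in~\cite{ishii2} you are invoking, and it does not follow from the factorization alone. The routine parts of your sketch (nonnegative distribution gives a Radon measure $\mu$, strictification of the maximum via $\varphi+\delta\abs{x-x_0}^2$, the inequality $Lf_\eps(x_\eps)\geq L\varphi(x_\eps)+c(x_\eps)\bigl(f_\eps(x_\eps)-\varphi(x_\eps)\bigr)$ at the mollified maximum) are all correct.
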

As the PDE is linear, a function $f\in C^0(\Omega)$ is a viscosity (distributional) supersolution of the equation $Lf+\alpha=0$ if and only if 
the function $-f$ is a viscosity (distributional) subsolution of $L(-f)-\alpha=0$, in the above theorems every occurrence of the 
term ``subsolution'' can replaced with ``supersolution'' (and actually
also with ``solution'').

For simplicity, we will work in a single coordinate chart of $M$ mapping onto $\Omega\subseteq
\R^n$, while the general situation can be dealt with by means of standard partition of unity arguments.\\
Consider $f \in C^0(M)$ which is a viscosity supersolution of $-\Delta^M f + \alpha = 0$.
It is a straightforward computation to check that this happens
if and only if $f$ is a viscosity supersolution of $-\sqrt{g}\Delta^M f +
\alpha\sqrt{g} = 0$, where $\sqrt{g} = \sqrt{\det g_{ij}}$ is the density of
Riemannian volume of $(M,g)$, and viceversa. Moreover, notice that
setting $L=-\sqrt{g}\Delta^M$ we have that $L^*=L$, that is, $L$ is a
self--adjoint operator; it also satisfies the hypotheses of
Ishii's theorems, being the matrices $g_{ij}$ and $g^{ij}$ smooth and
positive definite in $\Omega$ (see~\cite[Chapter~6]{Horn}, in particular
Example~6.2.14, for instance).

Then, in local coordinates, Ishii's theorems
guarantee that $f$ is a distributional supersolution of the same
equation, that is, $f$ satisfies, for each $\varphi \in
C^\infty_c(\Omega)$, 
$$
\int_\Omega f L^* \varphi \, dx \geq - \int_\Omega \alpha\sqrt{g} \varphi \,
dx\,,
$$
hence,
$$
\int_M-f\Delta^M\varphi\,d{\mathrm{Vol}}=\int_\Omega -f
\sqrt{g}\Delta^M \varphi \, dx 
\geq - \int_\Omega \alpha\sqrt{g} \varphi \,dx= -\int_M\alpha\varphi\,d{\mathrm{Vol}}\,.
$$
This shows that then $f$ satisfies $\Delta^M f \leq \alpha$ in distributional sense, as in
Definition~\ref{weakdef}.

Following these steps in reverse order, one gets the converse. Hence, the notions of
$\Delta^M\leq \alpha$ in viscosity and distributional sense coincide.

Now we turn our attention to the Hessian inequality; it is not covered 
directly by Ishii's theorems, which are peculiar to PDEs and do not deal 
with {\em systems} (like the general theory of viscosity
solutions). For simplicity, we discuss the case of an open set $\Omega\subset\R^n$ 
(with its canonical flat metric), since all the arguments can be extended to
any Riemannian manifold $(M,g)$ by localization and introduction of the first--order correction given by Christoffel symbols, as above.

The idea is to transform the matrix inequality $\Hess\,f\leq A$ 
into a family of scalar inequalities; indeed, if everything is
smooth, such inequality is satisfied if and only if for every compactly--supported, smooth vector
field $W$ we have $W^iW^j\Hess_{ij}f\leq A_{ij}W^iW^j$. The only price
to pay is that we lose the constant coefficients of the Hessian, hence
making the linear operator $L^W$, acting on $f\in C^2(\Omega)$ as $L^Wf=-W^iW^j\Hess_{ij}f$,
only {\em degenerate} elliptic. Notice that Ishii's condition in Theorem~\ref{Iteo2} is
satisfied for every smooth vector field $W$ such that $\Vert W\Vert\in C^1_c(\Omega)$,
but not by any arbitrary smooth vector field. This has the collateral effect of making
the proof of the Hessian case in Proposition~\ref{prop-equivalence} slightly asymmetric.

\begin{lemma}\label{lemmakey} Let $f\in C^0(\Omega)$. If for every compactly--supported, smooth vector
field $W$ with $\Vert W\Vert\in C^1_c(\Omega)$, 
we have that $f$ is a viscosity supersolution of the equation 
$-W^iW^j\Hess_{ij}f+A_{ij}W^iW^j=0$, then the function $f$ satisfies $\Hess\,f\leq A$ in viscosity sense in the whole $\Omega$.

Viceversa, if $f\in C^0(\Omega)$ satisfies $\Hess\,f\leq A$ in viscosity sense in $\Omega$, then 
$f$ is a viscosity supersolution of the equation 
$-V^iV^j\Hess_{ij}f+A_{ij}V^iV^j=0$ for every compactly--supported, smooth vector
field $V$.
\end{lemma}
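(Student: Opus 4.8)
The plan is to prove the two implications of Lemma~\ref{lemmakey} separately, exploiting the elementary linear-algebra fact that a symmetric matrix inequality is equivalent to the family of scalar inequalities obtained by testing against all vectors.

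\textbf{First implication.} Suppose $f$ is a viscosity supersolution of $-W^iW^j\Hess_{ij}f+A_{ij}W^iW^j=0$ for every smooth compactly supported $W$ with $\Vert W\Vert\in C^1_c(\Omega)$. Fix a point $x_0\in\Omega$ and a $C^2$ function $h$ defined near $x_0$ with $h(x_0)=f(x_0)$ and $h\leq f$; I must show $\Hess\,h(x_0)\leq A(x_0)$, i.e.\ $w^iw^j\Hess_{ij}h(x_0)\leq A_{ij}(x_0)w^iw^j$ for every fixed vector $w\in\R^n$. The idea is to pick a smooth vector field $W$ with $W(x_0)=w$, with $W$ compactly supported in a small neighborhood of $x_0$, and with $\Vert W\Vert\in C^1_c(\Omega)$ — for instance take $W=\chi(x)\,w$ for a suitable cutoff $\chi\in C^\infty_c$ with $\chi\equiv 1$ near $x_0$ (then $\Vert W\Vert=|w|\,\chi$ is indeed $C^1_c$). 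Near $x_0$ the function $h$ touches $f$ from below, so $h$ is an admissible test function for the viscosity supersolution property of $f$ with respect to the operator $L^W$ (after localizing $h$ with a cutoff so it is globally defined without destroying the touching at $x_0$, or by noting the definition is pointwise). Evaluating the supersolution inequality at $x_0$ gives $-W^iW^j\Hess_{ij}h(x_0)+A_{ij}W^iW^j\geq 0$, and since $W(x_0)=w$ this is exactly $w^iw^j\Hess_{ij}h(x_0)\leq A_{ij}(x_0)w^iw^j$. As $w$ was arbitrary, $\Hess\,h(x_0)\leq A(x_0)$.

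\textbf{Second implication.} Suppose $f$ satisfies $\Hess\,f\leq A$ in viscosity sense on $\Omega$, and fix an arbitrary smooth compactly supported $V$. To check $f$ is a viscosity supersolution of $-V^iV^j\Hess_{ij}f+A_{ij}V^iV^j=0$, take any $x_0\in\Omega$ and any $C^2$ function $\varphi$ near $x_0$ with $f(x_0)-\varphi(x_0)=\inf(f-\varphi)$; this is precisely the condition that $\varphi$ plus a constant touches $f$ from below at $x_0$, so by the viscosity definition $\Hess\,\varphi(x_0)\leq A(x_0)$ as symmetric matrices. Contracting this matrix inequality with the fixed vector $V(x_0)$ gives $V^iV^j\Hess_{ij}\varphi(x_0)\leq A_{ij}(x_0)V^iV^j$, i.e.\ $-V^iV^j\Hess_{ij}\varphi(x_0)+A_{ij}(x_0)V^iV^j\geq 0$, which is the supersolution inequality. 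No constraint on $\Vert V\Vert$ is needed here because we only use the matrix inequality at the single point $x_0$, not any regularity of $L^V$; this is the source of the asymmetry noted before the lemma.

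\textbf{Main obstacle.} The only delicate point is the first implication: the viscosity supersolution property is a statement about functions $h$ touching $f$ from below \emph{globally} (on $\Omega$), whereas the conclusion $\Hess\,f\leq A$ in viscosity sense is phrased with \emph{local} test functions. So I must take care, when passing from a local lower barrier $h$ at $x_0$ to a global test function, to modify $h$ away from $x_0$ (via a cutoff, replacing $h$ by something that stays below $f$ elsewhere, e.g.\ $h-C|x-x_0|^2$ near the boundary of its domain of definition, extended suitably) without changing its $2$-jet at $x_0$; since the viscosity conditions here are genuinely pointwise, this is a standard and harmless adjustment, but it is the step that needs the most care to state cleanly. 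Everything else is the linear-algebra observation that $X\leq A$ iff $v^iv^jX_{ij}\leq v^iv^jA_{ij}$ for all $v$, combined with the freedom to choose $W(x_0)$ to be any prescribed vector while keeping $\Vert W\Vert\in C^1_c$.
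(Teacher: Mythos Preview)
Your proof is correct and follows essentially the same approach as the paper: both implications are proved by reducing the matrix inequality to scalar inequalities via a cutoff vector field (the paper takes $W(y)=W_x\varphi^2(y)$ for a unit vector $W_x$, you take $W=\chi\,w$, which is the same idea), and then reading off the pointwise supersolution condition. You are somewhat more explicit than the paper about the local/global test-function issue, which the paper passes over in silence, but this is a standard adjustment and not a genuine difference in strategy.
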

\begin{proof}
Let us take a point $x \in \Omega$ and a $C^2$--function $h$ in 
a neighborhood $U$ of the point $x$ such that $h(x) = f(x)$ and $h \leq f$. Choosing a unit vector
$W_x$ and a smooth, nonnegative function $\varphi$, which is 1 at $x$ and zero outside a small ball inside $U$, we consider the smooth vector field 
$W(y)=W_x\varphi^2(y)$, for every $y\in\Omega$, 
which clearly satisfies $\Vert W\Vert=\varphi\in C^1_c(\Omega)$. By the hypothesis of the first statement, 
the function $f$ is then a viscosity supersolution of the equation 
$-W^iW^j\Hess_{ij}f+A_{ij}W^iW^j=0$ which implies that
$-W_x^iW_x^j\Hess_{ij}h(x)+A_{ij}(x)W_x^iW_x^j\geq0$.
Since this holds for every point $x\in\Omega$ and unit vector $W_x$, we conclude that 
$\Hess\,h(x)\leq A(x)$ as $(0,2)$--tensor fields, hence $\Hess\,f\leq A$ in viscosity sense in $\Omega$.

The argument to show the second statement is analogous: given a compactly--supported, smooth vector
field $V$, a point $x\in\Omega$ and a function $h$ as above, the hypothesis implies that 
$-V_x^iV_x^j\Hess_{ij}h(x)+A_{ij}(x)V_x^iV_x^j\geq0$, hence the thesis.
\end{proof}

Suppose now that $f\in C^0(\Omega)$ satisfies $\Hess\,f\leq A$ in viscosity sense on the whole $\Omega$; hence, by this lemma, 
for every compactly--supported, smooth vector field $V$, the function 
$f$ is a viscosity supersolution of the equation $-V^iV^j\Hess_{ij}f+A_{ij}V^iV^j=0$. By Theorem~\ref{Iteo1} and the subsequent discussion, it is then a 
distributional supersolution of the same equation, that is,
$$
\int_\Omega \Bigl[-f\nabla^2_{ji}(V^iV^j\varphi)+A_{ij}V^iV^j\varphi\Bigr]\,dx\geq0
$$
for every nonnegative, smooth function $\varphi\in C_c^\infty(\Omega)$.\\
Considering a nonnegative, smooth function $\varphi\in C_c^\infty(\Omega)$ such that it is one on the support of the vector field $V$ we conclude
$$
\int_\Omega f\nabla^2_{ji}(V^iV^j)\,dx\leq\int_\Omega A_{ij}V^iV^j\,dx\,,
$$
which means that $\Hess f\leq A$ in distributional sense.

Conversely, if $f\in C^0(\Omega)$ satisfies $\Hess\,f\leq A$ in distributional sense, then for every compactly--supported, smooth vector
field $W$ with $\Vert W\Vert\in C^1_c(\Omega)$ and smooth, nonnegative function $\varphi\in C^\infty_c(\Omega)$, 
we define the smooth, nonnegative functions $\varphi_n = \varphi + \psi/n$, where $\psi$ is a smooth, nonnegative and
compactly--supported function such that $\psi \equiv 1$ on the support
of $W$. It follows that the vector field $V={W}{\sqrt{\varphi_n}}$ is smooth, hence, applying the definition of 
$\Hess\,f\leq A$ in distributional sense, we get
\begin{equation*}
\int_\Omega \Bigl[-f\nabla^2_{ji}(W^iW^j\varphi_n)+A_{ij}W^iW^j\varphi_n\Bigr]\,dx\geq0\,.
\end{equation*}
As $\varphi_n \to \varphi$ in $C^\infty_c(\Omega)$ and $f$ is continuous, we can pass to the limit in $n\to\infty$ and conclude that
\begin{equation*}
\int_\Omega \Bigl[-f\nabla^2_{ji}(W^iW^j\varphi)+A_{ij}W^iW^j\varphi\Bigr]\,dx\geq0\,,
\end{equation*}
for every nonnegative, smooth function $\varphi\in C_c^\infty(\Omega)$ and 
every compactly--supported, smooth vector
field $W$ with $\Vert W\Vert\in C^1_c(\Omega)$. That is, for any vector field $W$ as above, we have that $f$ is a 
distributional supersolution of the equation $-W^iW^j\Hess_{ij}f+A_{ij}W^iW^j=0$.\\
By Theorem~\ref{Iteo2} and the subsequent discussion, it is then a 
viscosity supersolution of the same equation and, by Lemma~\ref{lemmakey}, we conclude that the function $f$ satisfies $\Hess\,f\leq A$ in viscosity sense.

Summarizing, we have the following sharp relations among the weak
notions of the partial differential inequalities
$\Hess\,f\leq A$ and $\Delta f\leq\alpha$,
$$
\text { barrier sense }\qquad\Longrightarrow\qquad\text{ viscosity sense }\qquad\Longleftrightarrow\qquad\text { distributional sense. }
$$

\bibliographystyle{amsplain}
\bibliography{biblio}

\providecommand{\bysame}{\leavevmode\hbox to3em{\hrulefill}\thinspace}
\providecommand{\MR}{\relax\ifhmode\unskip\space\fi MR }
\providecommand{\MRhref}[2]{%
  \href{http://www.ams.org/mathscinet-getitem?mr=#1}{#2}
}
\providecommand{\href}[2]{#2}
\begin{thebibliography}{10}

\bibitem{alambca}
G.~Alberti, L.~Ambrosio, and P.~Cannarsa, \emph{On the singularities of convex
  functions}, Manuscripta Math. \textbf{76} (1992), 421--435.

\bibitem{amb1}
L.~Ambrosio, \emph{A compactness theorem for a new class of functions of
  bounded variation}, Boll. Un. Mat. Ital. \textbf{3-B} (1989), 857--881.

\bibitem{amb3}
\bysame, \emph{Variational problems in {SBV}}, Acta Applicandae Mathematicae
  \textbf{17} (1989), 1--40.

\bibitem{amb4}
\bysame, \emph{Existence theory for a new class of variational problems}, Arch.
  Rat. Mech. Anal. \textbf{111} (1990), 291--322.

\bibitem{ambfupa}
L.~Ambrosio, N.~Fusco, and D.~Pallara, \emph{Special functions of bounded
  variation and free discontinuity problems}, Oxford University Press, 1999.

\bibitem{ardgui}
P.~A. Ardoy and L.~Guijarro, \emph{Cut and singular loci up to codimension 3},
  Ann. Inst. Fourier (Grenoble) \textbf{61} (2011), no.~4, 1655--1681 (2012).

\bibitem{braides}
A.~Braides, \emph{Approximation of free--discontinuity problems},
  Springer--Verlag, Berlin, 1998.

\bibitem{calabi}
E.~Calabi, \emph{An extension of {E}. {H}opf's maximum principle with an
  application to {R}iemannian geometry}, Duke Math. J. \textbf{25} (1958),
  45--56.

\bibitem{chegro1}
J.~Cheeger and D.~Gromoll, \emph{The splitting theorem for manifolds of
  nonnegative {R}icci curvature}, J. Diff. Geom. \textbf{6} (1971/72),
  119--128.

\bibitem{chegro2}
\bysame, \emph{On the structure of complete manifolds of nonnegative
  curvature}, Ann. of Math. (2) \textbf{96} (1972), 413--443.

\bibitem{crisli1}
M.~G. Crandall, H.~Ishii, and P.-L. Lions, \emph{User's guide to viscosity
  solutions of second order partial differential equations}, Bull. Amer. Math.
  Soc. \textbf{27/1} (1992), 1--67.

\bibitem{crisli3}
M.~G. Crandall and P.-L. Lions, \emph{Viscosity solutions of
  {H}amilton--{J}acobi equations}, Trans. Amer. Math. Soc. \textbf{277} (1983),
  1--43.

\bibitem{escheintz}
J.~Eschenburg and E.~Heintze, \emph{An elementary proof of the
  {C}heeger--{G}romoll splitting theorem}, Ann. Global Anal. Geom. \textbf{2}
  (1984), no.~2, 141--151.

\bibitem{fede}
H.~Federer, \emph{Geometric measure theory}, Springer--Verlag, 1969.

\bibitem{figrifvil}
A.~Figalli and L.~Rifford andf C.~Villani, \emph{Necessary and sufficient
  conditions for continuity of optimal transport maps on {R}iemannian
  manifolds}, Tohoku Math. J. \textbf{63} (2011), no.~4, 855--876.

\bibitem{gahula}
S.~Gallot, D.~Hulin, and J.~Lafontaine, \emph{{R}iemannian geometry},
  Springer--Verlag, 1990.

\bibitem{gromey}
D.~Gromoll and W.~Meyer, \emph{On complete open manifolds of positive
  curvature}, Ann. of Math. (2) \textbf{90} (1969), 75--90.

\bibitem{Horn}
R.~A. Horn and C.~R. Johnson, \emph{Topics in matrix analysis}, Cambridge
  University Press, Cambridge, 1994.

\bibitem{ishii2}
H.~Ishii, \emph{On the equivalence of two notions of weak solutions, viscosity
  solutions and distribution solutions}, Funkcial. Ekvac. \textbf{38} (1995),
  no.~1, 101--120.

\bibitem{nirenli}
Y.~Li and L.~Nirenberg, \emph{The distance function to the boundary, {F}insler
  geometry, and the singular set of viscosity solutions of some
  {H}amilton--{J}acobi equations}, Comm. Pure Appl. Math. \textbf{58} (2005),
  no.~1, 85--146.

\bibitem{lions2}
P.-L. Lions, \emph{Optimal control of diffusion processes and
  {H}amilton--{J}acobi--{B}ellman equations. {II}. {V}iscosity solutions and
  uniqueness}, Comm. Partial Differential Equations \textbf{8} (1983), no.~11,
  1229--1276.

\bibitem{mant4}
C.~Mantegazza and A.~C. Mennucci, \emph{Hamilton--{J}acobi equations and
  distance functions on {R}iemannian manifolds}, Appl. Math. Opt. \textbf{47}
  (2003), no.~1, 1--25.

\bibitem{myers1}
S.~B. Myers, \emph{Connections between differential geometry and topology},
  Duke Math. J. \textbf{1} (1935), 376--391.

\bibitem{myers2}
\bysame, \emph{Connections between differential geometry and topology {II}},
  Duke Math. J. \textbf{2} (1936), 95--102.

\bibitem{petersen1}
P.~Petersen, \emph{{R}iemannian geometry}, Springer--Verlag, 1998.

\bibitem{sakai}
T.~Sakai, \emph{{R}iemannian geometry}, AMS, 1996.

\bibitem{simon}
L.~Simon, \emph{Lectures on geometric measure theory}, Proc. Center Math.
  Anal., vol.~3, Australian National University, Canberra, 1983.

\end{thebibliography}

\end{document}